\documentclass[11pt]{amsart}
\usepackage{amsmath,amssymb,amsthm,mathtools}
\usepackage[margin=1in]{geometry}
\usepackage{mathrsfs}
\usepackage{graphicx}
\usepackage{hyperref}
\usepackage{tikz}
\usetikzlibrary{shapes.geometric, arrows.meta, positioning}
\usepackage{orcidlink} 

\newtheorem{theorem}{Theorem}[section]
\newtheorem{lemma}[theorem]{Lemma}

\newtheorem{corollary}[theorem]{Corollary}
\newtheorem{proposition}[theorem]{Proposition}

\newtheorem{definition}[theorem]{Definition}
\newtheorem{remark}[theorem]{Remark}
\newtheorem{counterexample}[theorem]{Counter-example}

\def\union{\mathop{\cup }}
\def\inter{\mathop{\cap }}
\def\petitoplus{\mathop{\bigoplus}}

\begin{document}
\title[Pego theorem for Hilbert space-valued functions on compact groups]{Pego theorem for Hilbert space-valued functions on compact groups}
\author{Anat\'e Kodjovi Lakmon$^1$ and Yaogan Mensah$^{1,2}$}

\address{$^1$ Department of Mathematics, University of Lom\'e, Lom\'e,  Togo}
\address{$^2$ ICMPA-Unesco-Chair, University of Abomey-Calavi, Calavi, Benin}
\email{davidlakmon@gmail.com, klakmon@univ-lome.tg}
\email{mensahyaogan2@gmail.com, ymensah@univ-lome.tg }

\dedicatory{In memory of Professor Koffi Kenny Siggini}

\begin{abstract}
We prove a Hilbert space-valued analogue of Pego's compactness theorem on compact groups. For square-integrable functions taking values in a Hilbert space  rather than in the complex numbers, we show that a bounded family is precompact exactly when it is simultaneously well behaved in two complementary senses: its members do not change much under small translations of the group, and their Fourier coefficients decay uniformly across the family. This  equivalence holds without restriction when  the Hilbert space is finite-dimensional, and it specializes to the known scalar-valued theorem when the Hilbert space is just the complex numbers. We then construct an explicit example showing that the equivalence genuinely breaks down once the Hilbert space is allowed to be infinite-dimensional.  To repair this, we introduce a uniform tightness condition and we show that under this extra hypothesis the equivalence is restored regardless of the dimension of the Hilbert space. Along the way we establish the Plancherel isometry, the Hausdorff-Young inequality and its inverse for this vector-valued Fourier transform. 
\end{abstract}

\keywords{compact group,  precompact, equicontinuity, Fourier decay, Pego theorem.}
\subjclass[2020]{43A30, 43A77, 46B50.}

\maketitle

\section{Introduction}

Characterizing the precompact subsets of a function space is one of the oldest problems in analysis, and it has a distinctive shape: precompactness tends to be equivalent to controlling a family simultaneously in two dual variables. The Arzel\`a-Ascoli theorem is the prototype for continuous functions on a compact space: a family is precompact once it is bounded and equicontinuous. The Riesz-Kolmogorov theorem (see \cite{Hanche-Olsen} for historical review) plays the same role for $L^p(\mathbb{R}^n)$, and Weil~\cite{Weil} later extended it to Lebesgue spaces over locally compact groups. Later, G\'orka and P\'ospiech extended  the Riesz-Kolmogorov theorem to Banach function spaces on locally compact groups \cite{Gorka2019}.  Underneath these results lies a recurring duality between a time domain and a frequency domain condition: control in one variable, plus control in its Fourier dual, together force compactness.

In 1985, Pego~\cite{Pego} made this duality fully explicit. He showed that a bounded subset of $L^2(\mathbb{R}^n)$ is precompact if and only if and only if the functions are uniformly continuous under translation and their transforms are uniformly continuous under translation in frequency. Beyond its intrinsic interest, this criterion has an appealing consequence for information theory: the rate of decay controls how many degrees of freedom are needed to approximate every function in the family to a given accuracy.

Pego's theorem has since been transplanted into several other transform settings: the short-time Fourier and wavelet transforms \cite{Dorfler2002}, the Laplace transform  \cite{Krukowski2020}, the Laguerre and Hankel transforms \cite{Horvath2022} and, closer to the present work, onto topological groups. G\'orka~\cite{Gorka2014, Gorka2016} extended it to locally compact abelian groups. Kumar~\cite{Kumar2024} extended it further  to compact groups that need not be abelian, replacing the ordinary Fourier transform with the operator-valued transform attached to the group's unitary dual. In a complementary direction, Krukowski~\cite{Krukowski} proved an $L^1$, rather than $L^2$, analogue over locally compact abelian groups, by identifying $L^1(G)$ with a subalgebra of continuous functions vanishing at infinity on the dual and adapting the Arzel\`a-Ascoli theorem itself to that setting.

Every one of these results, however, concerns scalar-valued functions. This paper develops the natural next step: functions on a compact group $G$ that take values not in $\mathbb{C}$ but in a fixed complex Hilbert space $\mathcal{H}$. This is more than a routine generalization. The key technical fact that makes Kumar's argument work, namely, that the Fourier transform is an isometry, turns out to depend essentially on $\mathcal{H}$ being a genuine Hilbert space. We give an explicit example (Counter-example~\ref{ex:planch-fails-Cstar}) showing that this isometry fails as soon as $\mathcal{H}$ is replaced by a more general Banach space. Working with an actual Hilbert space restores the isometry (Theorem~\ref{theo:plancherel}), and from it we recover the full Hausdorff-Young inequality and its inverse (Theorem~\ref{prop:HY-final} and Theorem~\ref{theo:invHY}), obtained by complex interpolation \`a la Calder\'on~\cite{BerghLofstrom} between the two endpoint cases $p=1$ and $p=2$.

With this foundation in place, we characterize the precompact subsets of the vector-valued space $L^2(G,\mathcal{H})$ in terms of uniform equicontinuity under translation and uniform decay of the vector-valued Fourier coefficients (Theorem~\ref{thm:main}). This equivalence holds outright when $\mathcal{H}$ is finite-dimensional, and it reduces exactly to the Pego theorem proved by Kumar in \cite{Kumar2024} when $\dim \mathcal{H} = 1$. We then exhibit an explicit counterexample (Counter-example~\ref{ex:counterexample}) showing that the equivalence genuinely fails once $\mathcal{H}$ is infinite-dimensional: a family can satisfy both the translation and the Fourier-decay conditions while failing to be precompact, simply because bounded sets in infinite-dimensional Hilbert spaces need not be precompact. To repair this, we introduce a uniform tightness condition which is the  vector-valued counterpart of Krukowski's equivanishing condition~\cite{Krukowski}:  supply exactly the missing control in the "value" direction that translation-equicontinuity and frequency-decay do not provide on their own once the space of values stops being finite-dimensional. Under this extra hypothesis, we show that the full  equivalence holds regardless of the dimension of $\mathcal{H}$ (Theorem~\ref{thm:main-tight}).

The paper is organized as follows. Section~\ref{sec:prelim} recalls the needed background on representations of compact groups and scalar-valued Fourier analysis,  including  the Pego  theorem provided by Kumar (Theorem~\ref{thm:kumar}). Section~\ref{sec: main results} contains all of the new results. It first develops the Fourier transform of Hilbert space-valued functions (following the approach of \cite{Assiamoua1989}), establishes the Plancherel isometry and the Hausdorff-Young inequality and its inverse, and exhibits the parallelogram-law obstruction that prevents these results from extending beyond Hilbert space-valued functions. It then turns to compactness: the equicontinuity/decay equivalence, the finite-dimensional Pego theorem, an explicit counterexample showing its failure in infinite dimensions, and the tightness-restored version that holds in full generality.
  
\section{Preliminary notes}\label{sec:prelim}
\subsection{Representations of compact groups and Fourier analysis}

This section delivers the core insights needed to understand the article. We are chiefly concerned with Fourier analysis of scalar-valued functions on a compact group.
 For more details, we refer \cite{Folland} and the preliminary notes of \cite{Kumar2024}
 
\noindent Let $G$ be a compact Hausdorff group with identity $e$. There is a unique
regular Borel probability measure $m_G$  on $G$, invariant under left and right
translation and under inversion.  
The measure $m_G$ is called the Haar measure of $G$. Throughout this  paper, 
 $L^p(G), \,1\le p\le\infty$, denote the usual Lebesgue spaces  with respect to the measure $m_G$ and endowed with their natural $L^p$-norm.

\noindent A unitary representation of $G$ is a pair $(\sigma,H_\sigma)$, $H_\sigma$ a
complex Hilbert space, $\sigma:G\to U(H_\sigma)$ a homomorphism from $G$ into the unitary group
of $H_\sigma$, continuous in the sense that $x\mapsto\sigma(x)\xi$ is continuous for
every $\xi\in H_\sigma$. Its dimension is $d_\sigma:=\dim H_\sigma$. A subspace $W$ of $H_\sigma$ is said to be invariant by $\sigma$ if $$\forall x\in G,\,\forall \xi\in W, \sigma(x)\xi\in W.$$
The representation $(\sigma,H_\sigma)$ issaid to be  irreducible if $\{0\}$ and $H_\sigma$ are its only closed
invariant subspaces. Two representations $(\sigma_i,H_{\sigma_i})$,
$i=1,2$, are called unitarily equivalent if there is a unitary isomorphism  $T:H_{\sigma_1}\to
H_{\sigma_2}$ such that $T\sigma_1(x)=\sigma_2(x)T$ for all $x\in G$.
The following theorem is well-known. 
\begin{theorem}(\cite[page 126]{Folland})
If $G$ is compact, every irreducible unitary representation of $G$ is
finite-dimensional.
\end{theorem}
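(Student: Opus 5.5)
The plan is the classical argument via compact self-adjoint convolution operators. Let $(\sigma,H_\sigma)$ be an irreducible unitary representation of $G$. Fix a unit vector $\xi\in H_\sigma$. Let $P$ be the rank-one orthogonal projection onto $\mathbb{C}\xi$, and average its conjugates over the group:
\[
T=\int_G \sigma(x)P\sigma(x)^{-1}\,dm_G(x),
\]
understood in the weak sense, so that $\langle T\eta,\zeta\rangle=\int_G \langle \eta,\sigma(x)\xi\rangle\overline{\langle \zeta,\sigma(x)\xi\rangle}\,dm_G(x)$. The integrand is continuous in $x$ because $x\mapsto \sigma(x)\xi$ is continuous, and $m_G$ is a probability measure. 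Hence $T$ is a well-defined bounded operator with $\|T\|\le 1$.

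The key steps, in order, are as follows.
\begin{enumerate}
\item $T$ is self-adjoint and positive, since $\langle T\eta,\eta\rangle=\int_G|\langle\eta,\sigma(x)\xi\rangle|^2\,dm_G(x)\ge 0$.
\item $T$ is nonzero, since $\langle T\xi,\xi\rangle>0$: the integrand is continuous and equals $1$ at $x=e$. Here one uses that the Haar measure charges every nonempty open set.
\item $T$ commutes with every $\sigma(y)$. This follows from left invariance of $m_G$ via the substitution $x\mapsto yx$.
\item $T$ is compact.
\end{enumerate}

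Step 4 is the main obstacle. The plan is to approximate $T$ in operator norm by finite-rank operators. The map $x\mapsto \sigma(x)P\sigma(x)^{-1}$ is the rank-one projection onto $\mathbb{C}\sigma(x)\xi$, and it is norm-continuous: its norm difference from the projection at $y$ is at most $2\|\sigma(x)\xi-\sigma(y)\xi\|$. By compactness of $G$ and uniform continuity, given $\varepsilon>0$ we can partition $G$ into finitely many Borel sets $E_1,\dots,E_n$ with points $x_j\in E_j$ such that the projections vary by less than $\varepsilon$ on each $E_j$. Then $\|T-\sum_j m_G(E_j)\,\sigma(x_j)P\sigma(x_j)^{-1}\|\le\varepsilon$. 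The sum is of rank at most $n$, so $T$ is a norm limit of finite-rank operators and is therefore compact.

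Now apply the spectral theorem for compact self-adjoint operators. Since $T\neq 0$, it has a nonzero eigenvalue $\lambda$ whose eigenspace $E_\lambda=\ker(T-\lambda I)$ is finite-dimensional. Because $T$ commutes with each $\sigma(y)$, the closed subspace $E_\lambda$ is invariant under $\sigma$. It is nonzero, so irreducibility forces $E_\lambda=H_\sigma$. Hence $d_\sigma=\dim E_\lambda<\infty$. (In fact $T=\lambda I$, which is Schur's lemma in disguise, but we do not need this.)
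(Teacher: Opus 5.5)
Your proof is correct and is essentially the standard argument of the reference the paper cites (Folland, p.~126). That argument averages the rank-one projection onto $\mathbb{C}\xi$ over $G$ to obtain a nonzero, positive, compact operator commuting with $\sigma$, then takes a finite-dimensional eigenspace for a nonzero eigenvalue as a nonzero closed invariant subspace; the paper itself gives no proof beyond that citation.
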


\noindent Let $\widehat{G}$ denote the set of unitary equivalence classes of irreducible unitary
representations of $G$.  It is a discrete set.
Fix, for each $\sigma\in\widehat{G}$, an orthonormal basis $\{\xi_1^\sigma,\dots,
\xi_{d_\sigma}^\sigma\}$ of $H_\sigma$. The matrix coefficients of $\sigma$ are the functions $u^\sigma_{ij}$ defined by
\[
u^\sigma_{ij}(x)=\langle\sigma(x)\xi_i^\sigma,\xi_j^\sigma\rangle_{H_\sigma},\quad
x\in G,\ 1\le i,j\le d_\sigma.
\]
Each $u^\sigma_{ij}\in C(G)$, with $|u^\sigma_{ij}(x)|\le1$ for all $x$.
We recall the following two theorems. 
\begin{theorem}[Schur orthogonality relations](\cite[page 129]{Folland})
For $\sigma,\tau\in\widehat{G}$, $1\le i,j\le d_\sigma$, $1\le k,l\le d_\tau$,
\[
\int_G u^\sigma_{ij}(x)\,\overline{u^\tau_{kl}(x)}\,dm_G(x)=
\frac{\delta_{\sigma\tau}\,\delta_{ik}\,\delta_{jl}}{d_\sigma}.
\]

\end{theorem}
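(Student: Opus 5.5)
The plan is the standard Hilbert-space argument built on averaging intertwiners over $G$ and invoking Schur's lemma. Fix $\sigma,\tau\in\widehat{G}$ and an arbitrary linear map $A:H_\tau\to H_\sigma$. I will form the averaged operator
\[
T_A=\int_G \sigma(x)\,A\,\tau(x)^{-1}\,dm_G(x),
\]
which is well defined because both spaces are finite-dimensional and the integrand is continuous. Left invariance of $m_G$ gives $\sigma(y)T_A\tau(y)^{-1}=T_A$ for all $y$, so $T_A$ intertwines $\tau$ and $\sigma$.

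The next step applies Schur's lemma (which I take as part of the standard background in \cite{Folland}). If $\sigma\not\simeq\tau$, then $T_A=0$. If $\sigma=\tau$ (same representative), then $T_A=c_A I$. Taking traces and using cyclicity together with $m_G(G)=1$ gives $c_A d_\sigma=\operatorname{tr}A$, so $c_A=\operatorname{tr}(A)/d_\sigma$.

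Now specialize to rank-one maps. Take $A=\xi_j^\sigma\otimes\overline{\xi_l^\tau}$, that is, $A\eta=\langle\eta,\xi_l^\tau\rangle\xi_j^\sigma$, and compute the matrix entry $\langle T_A\xi_k^\tau,\xi_i^\sigma\rangle$. Unitarity gives $\tau(x)^{-1}=\tau(x)^*$, so
\[
\langle \sigma(x)A\tau(x)^*\xi_k^\tau,\xi_i^\sigma\rangle=\langle \tau(x)^*\xi_k^\tau,\xi_l^\tau\rangle\,\langle\sigma(x)\xi_j^\sigma,\xi_i^\sigma\rangle .
\]
The right-hand side equals $\overline{u^\tau_{lk}(x)}\,u^\sigma_{ji}(x)$, since $\langle\tau(x)^*\xi_k,\xi_l\rangle=\langle\xi_k,\tau(x)\xi_l\rangle=\overline{\langle\tau(x)\xi_l,\xi_k\rangle}$. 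Integrating, this entry is zero when $\sigma\neq\tau$. When $\sigma=\tau$, it equals $\frac{\operatorname{tr}A}{d_\sigma}\delta_{ik}=\frac{\delta_{jl}\delta_{ik}}{d_\sigma}$. Relabelling $(j,i,l,k)\to(i,j,k,l)$ then yields exactly the stated formula.

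The main obstacle is bookkeeping rather than ideas. The index order in the paper's convention $u_{ij}(x)=\langle\sigma(x)\xi_i,\xi_j\rangle$ is transposed relative to the usual matrix-entry convention, so I will check the placement of conjugates and indices carefully. I also need to note that $\delta_{\sigma\tau}$ refers to fixed representatives of the classes; equivalent but distinct representatives do not arise.
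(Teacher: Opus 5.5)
Your proposal is correct and follows essentially the same route as the paper's source. The paper gives no proof of its own and simply cites Folland, whose argument likewise averages an arbitrary linear map $A$ over $G$ to obtain an intertwiner, applies Schur's lemma (the average is $0$ for inequivalent classes and $\operatorname{tr}(A)/d_\sigma$ times the identity otherwise), and reads off the relations from rank-one choices of $A$. Your index bookkeeping for the paper's transposed convention $u^\sigma_{ij}(x)=\langle\sigma(x)\xi_i^\sigma,\xi_j^\sigma\rangle$ checks out; in any case the stated identity is unchanged if both coefficient functions are transposed at once.
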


\begin{theorem}[Peter-Weyl](\cite[page 133]{Folland})
The family $\{\sqrt{d_\sigma}\,u^\sigma_{ij} : \sigma\in\widehat{G},\ 1\le i,j\le d_\sigma\}$
is an orthonormal basis of $L^2(G)$.
\end{theorem}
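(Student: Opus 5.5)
Orthonormality follows at once from the Schur orthogonality relations. Taking $\tau=\sigma$ there gives
\[
\langle \sqrt{d_\sigma}u^\sigma_{ij},\sqrt{d_\tau}u^\tau_{kl}\rangle_{L^2(G)}=\delta_{\sigma\tau}\delta_{ik}\delta_{jl}.
\]
So the real content is completeness. Let $\mathcal{E}$ be the linear span of all matrix coefficients $u^\sigma_{ij}$, and let $M=\mathcal{E}^\perp$ in $L^2(G)$. I would show $M=\{0\}$.

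\textbf{Step 1 (invariance).} First, $\mathcal{E}$ does not depend on the chosen orthonormal bases or representatives of $\sigma$: a change of basis or a unitary equivalence replaces each coefficient by a linear combination of the $u^\sigma_{ij}$. Next, $\mathcal{E}$ is invariant under left and right translations, since
\[
u^\sigma_{ij}(yx)=\sum_k u^\sigma_{ik}(y)\,u^\sigma_{kj}(x).
\]
Finally, $\mathcal{E}$ is closed under complex conjugation, via the contragredient representation. Hence $M$ is also invariant under both translations.

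\textbf{Step 2 (finite-dimensional invariant subspaces lie in $\mathcal{E}$).} Let $V\subset C(G)$ be finite-dimensional and invariant under the right regular representation $(R_xg)(y)=g(yx)$. Then $R|_V$ is a finite-dimensional continuous unitary representation with respect to the $L^2$ inner product. It splits into irreducibles, so I may assume it is irreducible. Choose an orthonormal basis $e_1,\dots,e_d$ of $V$. For $g\in V$,
\[
g(x)=(R_xg)(e)=\sum_k \langle R_xg,e_k\rangle\, e_k(e).
\]
Expanding $g$ in the basis, each function $x\mapsto\langle R_xe_j,e_k\rangle$ is a matrix coefficient of an irreducible representation. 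It therefore lies in $\mathcal{E}$ by Step 1, and so $V\subset\mathcal{E}$.

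\textbf{Step 3 (compact operator argument).} Fix a continuous $\psi$ with $\psi(x^{-1})=\overline{\psi(x)}$, and set $Tf=\psi*f$. Then $T$ is a Hilbert–Schmidt operator with continuous kernel $\psi(xy^{-1})$. It is therefore compact and self-adjoint, and its range consists of continuous functions. Because $T$ is left convolution, it commutes with right translations. It also maps $\mathcal{E}$ into $\mathcal{E}$, since $\mathcal{E}$ is left-invariant and finite-dimensional within each $\sigma$-block. By self-adjointness, $T$ then preserves $M$.

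If $T|_M\neq0$, the spectral theorem for compact self-adjoint operators gives a nonzero eigenvalue $\lambda$ whose eigenspace $V_\lambda\subset M$ is finite-dimensional. This eigenspace is right-invariant, and it consists of continuous functions because $V_\lambda\subset\operatorname{ran}T$. By Step 2, $V_\lambda\subset\mathcal{E}\cap M=\{0\}$, a contradiction. Hence $\psi*f=0$ for every $f\in M$ and every such $\psi$.

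Now let $\psi$ run through an approximate identity of symmetric continuous functions supported in shrinking neighbourhoods of $e$. Then $\psi*f\to f$ in $L^2$, by continuity of translation in $L^2(G)$, so $f=0$.

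The main obstacle is Step 2: turning an abstract finite-dimensional invariant subspace into actual matrix coefficients. This requires care with the unitary equivalence classes and with the identification of basis-dependent coefficients with elements of $\mathcal{E}$, which Step 1 is designed to handle. The compactness and spectral argument in Step 3 is standard once that identification is in place.
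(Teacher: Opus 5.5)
Your proof is correct. The paper does not prove this statement; it cites Folland, where completeness is obtained by a different route. There one checks that $\mathcal{E}$ is an algebra closed under conjugation and containing the constants (a product of matrix coefficients is a coefficient of a tensor product, which splits into irreducibles). The Gelfand--Raikov theorem then shows that the irreducible representations separate the points of $G$. Stone--Weierstrass gives uniform density of $\mathcal{E}$ in $C(G)$, hence density in $L^2(G)$, and orthonormality is Schur, exactly as in your first line.

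Your argument is instead the classical integral-operator proof. It needs only the spectral theorem for compact self-adjoint operators, compactness of operators with continuous kernel, and continuity of translation in $L^2(G)$. So it avoids both the tensor-product structure of $\mathcal{E}$ and the Gelfand--Raikov theorem, which Folland proves for arbitrary locally compact groups via functions of positive type and Krein--Milman. As a by-product, your Step 2 gives the useful lemma that every finite-dimensional right-invariant subspace of $C(G)$ lies in $\mathcal{E}$.

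What Folland's route gives directly is uniform density in $C(G)$. Yours targets $L^2$ only, which is all the statement requires, and uniform density follows from your setup in two lines: $\|\psi*h\|_\infty\le\|\psi\|_{L^2(G)}\|h\|_{L^2(G)}$, $\psi*\mathcal{E}\subset\mathcal{E}$, and $\psi*f\to f$ uniformly for $f\in C(G)$.

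One cosmetic slip: with the paper's convention $u^\sigma_{ij}(x)=\langle\sigma(x)\xi_i^\sigma,\xi_j^\sigma\rangle_{H_\sigma}$, one has $u^\sigma_{ij}(yx)=\sum_k u^\sigma_{kj}(y)\,u^\sigma_{ik}(x)$. Your displayed formula is the one for $u^\sigma_{ij}(xy)$. This does not affect the translation-invariance claim.
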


For $y\in G$ and $f:G\to\mathbb C$, the right translation  is defined by  $$R_yf(x):=f(xy).$$
The following proposition holds. 
\begin{proposition}\label{proposition:translation-isometry}
For every $y\in G$ and $1\le p<\infty$, $R_y$ is an isometry of $L^p(G)$.
\end{proposition}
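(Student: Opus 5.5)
The plan is to prove the isometry property directly from right-invariance of the Haar measure $m_G$, after first checking that $R_y$ is well defined on $L^p(G)$.

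First, I will check that $R_y$ respects measurability and equivalence classes. The map $\rho_y : x\mapsto xy$ is a homeomorphism of $G$, so it is Borel measurable, and $R_yf=f\circ\rho_y$ is measurable whenever $f$ is. If $f=g$ $m_G$-almost everywhere, then the exceptional null set $N$ is carried to $\rho_y^{-1}(N)=Ny^{-1}$. Right invariance gives $m_G(Ny^{-1})=m_G(N)=0$, so $R_yf=R_yg$ almost everywhere and $R_y$ is well defined on classes.

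The main step is the norm identity. Applying right invariance of $m_G$ to $|f|^p$, together with the change of variables $z=xy$, gives
\[
\|R_yf\|_p^p=\int_G |f(xy)|^p\,dm_G(x)=\int_G |f(z)|^p\,dm_G(z)=\|f\|_p^p .
\]
To make the change of variables rigorous, I will first verify the identity $\int_G \varphi(xy)\,dm_G(x)=\int_G\varphi\,dm_G$ for indicator functions $\varphi=\mathbf 1_E$. There it reads $m_G(Ey^{-1})=m_G(E)$, which is exactly right invariance. Linearity then extends it to simple functions. Monotone convergence extends it to nonnegative measurable functions, in particular to $\varphi=|f|^p$. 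This limiting argument is the only point needing care, and it is routine.

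Finally, I will record the remaining properties. Linearity of $R_y$ is immediate from its definition. It is also bijective, with inverse $R_{y^{-1}}$, since $R_{y^{-1}}R_yf(x)=f(xy^{-1}y)=f(x)$. So $R_y$ is in fact a surjective linear isometry of $L^p(G)$ for each $1\le p<\infty$.
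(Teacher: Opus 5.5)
Your proof is correct and takes the same route as the paper. The paper states this proposition without proof as a standard fact, and when it later uses it (in the proof of Lemma~\ref{lem:equicont}) it justifies it only by ``the right-invariance of $m_G$,'' which is exactly what your indicator, simple-function and monotone-convergence argument makes rigorous; your extra checks (well-definedness on a.e.-classes and surjectivity via $R_{y^{-1}}$) are correct and slightly strengthen the statement.
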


For $\sigma\in\widehat{G}$, let $B(H_\sigma)$ denote the spaces of bounded linear operators on $B(H_\sigma)$ with the operator norm. For $1\le p<\infty$,  

$$\ell^p\text{-}\bigoplus_{\sigma\in\widehat{G}}B(H_\sigma)=\Big\{(T_\sigma):
\sum_{\sigma\in\widehat{G}}d_\sigma\|T_\sigma\|_{B(H_\sigma)}^p<\infty\Big\}$$
with the norm 
$$\|(T_\sigma)\|_{\ell^p\text{-}\bigoplus B(H_\sigma)}=\Big(\sum_{\sigma\in\widehat{G}}d_\sigma\|T_\sigma\|_{B(H_\sigma)}^p\Big)^{1/p}.$$

$$\ell^\infty\text{-}\bigoplus_{\sigma\in\widehat{G}}B(H_\sigma):=\Big\{(T_\sigma):
\sup_{\sigma\in\widehat{G}}\|T_\sigma\|_{B(H_\sigma)}<\infty\Big\}$$
with the norm
$$\|(T_\sigma)\|_{\ell^\infty\text{-}\bigoplus B(H_\sigma)}=\sup_\sigma\|T_\sigma\|_{B(H_\sigma)}.$$
And, 
 $c_0\text{-}\petitoplus\limits_{\sigma\in\widehat{G}}B(H_\sigma)$ is the closed subspace of
$\ell^\infty\text{-}\petitoplus\limits_{\sigma\in\widehat{G}} B(H_\sigma)$  consisting of   $(T_\sigma)$ such that 
$T_\sigma$ tends to $0$ as $\sigma$ goes to $\infty$  i.e. for every $\varepsilon>0$ the set
$\{\sigma\in \widehat{G}:\|T_\sigma\|_{B(H_\sigma)}>\varepsilon\}$ is finite.

For $f\in L^1(G)$, the Fourier transform of $f$ is given by
$$
\widehat f(\sigma)=\int_G f(x)\,\sigma(x)^*dm_G(x)\in B(H_\sigma),\quad\sigma\in\widehat{G}.
$$

\begin{theorem}
The Fourier transformation  $\mathscr{F}: f\mapsto\widehat f$ is injective and bounded from $L^1(G)$ into
$\ell^\infty\text{-}\petitoplus\limits_{\sigma \in \widehat{G}} B(H_\sigma)$. Moreover,    
$\widehat f\in c_0\text{-}\petitoplus\limits_{\sigma\in\widehat{G}}B(H_\sigma)$  (the Riemann-Lebesgue lemma).
\end{theorem}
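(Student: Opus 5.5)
The plan is to prove the three assertions in turn, boundedness, then the Riemann--Lebesgue property, then injectivity, using only the Schur relations and the Peter--Weyl theorem recalled above.

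\emph{Boundedness and linearity.} Linearity of $\mathscr F$ is clear from linearity of the Bochner (or weak) integral. For $f\in L^1(G)$ and $\sigma\in\widehat G$, each $\sigma(x)^*$ is unitary, so $\|\sigma(x)^*\|_{B(H_\sigma)}=1$. Since $x\mapsto f(x)\sigma(x)^*$ is a continuous operator-valued map times an integrable scalar function, the integral defining $\widehat f(\sigma)$ exists in the finite-dimensional space $B(H_\sigma)$. Moreover
\[
\|\widehat f(\sigma)\|_{B(H_\sigma)}\le\int_G|f(x)|\,dm_G(x)=\|f\|_{L^1(G)}.
\]
Taking the supremum over $\sigma$ gives $\|\widehat f\|_{\ell^\infty\text{-}\bigoplus B(H_\sigma)}\le\|f\|_1$. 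Hence $\mathscr F$ is a bounded linear map of norm at most $1$.

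\emph{Riemann--Lebesgue.} In the basis $\{\xi^\sigma_i\}$ the matrix entries of $\widehat f(\sigma)$ are
\[
\langle\widehat f(\sigma)\xi^\sigma_j,\xi^\sigma_i\rangle=\int_G f\,\overline{u^\sigma_{ij}}\,dm_G.
\]
Suppose first that $f$ is a trigonometric polynomial, i.e.\ a finite linear combination of matrix coefficients. By the Schur orthogonality relations, $\widehat f(\sigma)=0$ for all but finitely many $\sigma$, so $\widehat f\in c_0\text{-}\bigoplus B(H_\sigma)$. Trigonometric polynomials are dense in $L^2(G)$ by Peter--Weyl. Since $m_G$ is a probability measure, $\|\cdot\|_1\le\|\cdot\|_2$, and $L^2(G)$ is dense in $L^1(G)$ (it contains $C(G)$). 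Hence trigonometric polynomials are dense in $L^1(G)$. Because $c_0\text{-}\bigoplus B(H_\sigma)$ is closed in $\ell^\infty\text{-}\bigoplus B(H_\sigma)$ and $\mathscr F$ is continuous, it follows that $\mathscr F(L^1(G))\subseteq c_0\text{-}\bigoplus B(H_\sigma)$.

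\emph{Injectivity.} This is the main obstacle: for $f\in L^1\setminus L^2$ we cannot simply read off Peter--Weyl coefficients. Suppose $\widehat f=0$; then $\int_G f\,\overline{u^\sigma_{ij}}\,dm_G=0$ for all $\sigma,i,j$. I would smooth $f$ by convolution with a continuous approximate identity $(\varphi_\alpha)$, where $\varphi_\alpha\ge0$, $\int\varphi_\alpha=1$, and the supports shrink to $e$. Then $f*\varphi_\alpha\in C(G)\subset L^2(G)$ and $f*\varphi_\alpha\to f$ in $L^1$; the latter uses continuity of translation in $L^1$, which follows from density of $C(G)$ and Proposition~\ref{proposition:translation-isometry}.

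The key computation is the multiplicativity $\widehat{f*\varphi}(\sigma)=\widehat\varphi(\sigma)\widehat f(\sigma)$. It follows from Fubini and the homomorphism property $\sigma(xy)^*=\sigma(y)^*\sigma(x)^*$, and it forces $\widehat{f*\varphi_\alpha}=0$. Hence $f*\varphi_\alpha$ is orthogonal to every $\sqrt{d_\sigma}u^\sigma_{ij}$, so $f*\varphi_\alpha=0$ by Peter--Weyl. Letting $\alpha\to$ the limit gives $f=0$ in $L^1(G)$.

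An alternative route to injectivity avoids convolution. One notes that $\int fg=0$ for every trigonometric polynomial $g$. These are uniformly dense in $C(G)$ (Peter--Weyl in its Stone--Weierstrass form, since they form a self-adjoint point-separating algebra). Therefore $f\,dm_G$ annihilates $C(G)$, and the Riesz representation theorem gives $f=0$ a.e. I would use whichever of these two routes better matches the references cited.
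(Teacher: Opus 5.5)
The paper gives no proof of this statement. It is quoted in the preliminaries as standard background, with a general pointer to Folland and to Kumar's preliminary section, so there is no argument of the authors' to compare yours against.

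Your proof is correct and follows the standard lines:
\begin{itemize}
\item \emph{Boundedness:} the estimate $\|\widehat f(\sigma)\|\le\|f\|_1$.
\item \emph{Riemann--Lebesgue:} $\widehat p$ has finite support for every trigonometric polynomial $p$; such $p$ are dense in $L^1(G)$; and $c_0\text{-}\bigoplus B(H_\sigma)$ is closed in $\ell^\infty\text{-}\bigoplus B(H_\sigma)$.
\item \emph{Injectivity:} a density argument.
\end{itemize}

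Of your two injectivity routes, the first suits this paper better. It uses only facts the paper actually recalls: the $L^2$ form of Peter--Weyl and the convolution identity. Your ordering $\widehat{f*\varphi}(\sigma)=\widehat\varphi(\sigma)\widehat f(\sigma)$ is the correct one for the conventions $(f*g)(x)=\int_G f(xy^{-1})g(y)\,dm_G(y)$ and $\widehat f(\sigma)=\int_G f\,\sigma^*\,dm_G$; the paper's displayed version of this identity contains a misprint.

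The second route is shorter but depends on two further facts:
\begin{itemize}
\item It needs the uniform-density form of Peter--Weyl, not the $L^2$-basis form recalled in the paper. Its real content is that irreducible representations separate the points of $G$ (Gelfand--Raikov), which is not a formality of Stone--Weierstrass.
\item It needs conjugates of matrix coefficients to be matrix coefficients again, namely of the contragredient representation. This is what lets you pass from $\int_G f\,\overline{u^\sigma_{ij}}\,dm_G=0$ to $\int_G fg\,dm_G=0$ for all trigonometric polynomials $g$.
\end{itemize}
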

\begin{theorem}[Fourier inversion]
For $f\in L^2(G)$,
\[
f(x)=\sum_{\sigma\in\widehat{G}}d_\sigma\,\operatorname{tr}\big(\widehat f(\sigma)\,\sigma(x)\big),
\quad x\in G,
\]
where convergence holds in the $L^2(G)$-norm.
\end{theorem}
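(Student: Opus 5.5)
The plan is to derive the inversion formula from the Peter--Weyl theorem, by expanding $f$ in the orthonormal basis $\{\sqrt{d_\sigma}\,u^\sigma_{ij}\}$ and then regrouping the expansion in terms of $\widehat f(\sigma)$. By Peter--Weyl, every $f\in L^2(G)$ satisfies
\[
f=\sum_{\sigma\in\widehat G}\sum_{i,j=1}^{d_\sigma} d_\sigma\,\langle f,u^\sigma_{ij}\rangle_{L^2}\,u^\sigma_{ij},
\]
with convergence in $L^2(G)$. The convergence is unconditional, since this is an orthonormal expansion. Consequently we may first group the terms by $\sigma$ (each block is a finite sum) and then sum over $\sigma$ in any order, and the result still converges to $f$ in $L^2$.

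The core of the argument is identifying each block with $d_\sigma\operatorname{tr}(\widehat f(\sigma)\sigma(x))$. We compute the matrix entries of $\widehat f(\sigma)$ in the basis $\{\xi^\sigma_k\}$. We have
\[
\langle \widehat f(\sigma)\xi_j,\xi_i\rangle=\int_G f(x)\langle \sigma(x)^*\xi_j,\xi_i\rangle\,dm_G(x)=\int_G f(x)\overline{\langle\sigma(x)\xi_i,\xi_j\rangle}\,dm_G(x)=\langle f,u^\sigma_{ij}\rangle_{L^2}.
\]
The first equality requires moving the inner product inside the Bochner (or entrywise) integral. This is legitimate because $f\in L^2\subset L^1$ on a probability space and $\|\sigma(x)^*\|=1$. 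Next, $\langle\sigma(x)\xi_i,\xi_k\rangle=u^\sigma_{ik}(x)$. Expanding the trace in the basis then gives
\[
\operatorname{tr}(\widehat f(\sigma)\sigma(x))=\sum_{i,k}\langle\widehat f(\sigma)\xi_k,\xi_i\rangle\,\langle\sigma(x)\xi_i,\xi_k\rangle=\sum_{i,k}\langle f,u^\sigma_{ik}\rangle\,u^\sigma_{ik}(x).
\]
This is exactly the $\sigma$-block of the Peter--Weyl expansion divided by $d_\sigma$. Multiplying by $d_\sigma$ and summing over $\sigma$ yields the stated formula, with $L^2$ convergence inherited from the previous step.

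The main obstacle is purely bookkeeping: keeping the index conventions and the conjugations consistent. The paper defines $u^\sigma_{ij}=\langle\sigma(x)\xi_i,\xi_j\rangle$, which is the transpose of the usual matrix-entry convention, and the adjoint $\sigma(x)^*$ appears in $\widehat f$. A misplaced index would produce $\operatorname{tr}(\widehat f(\sigma)^{T}\sigma(x))$ instead of the correct expression, so I would double-check the entry computation above against the $1$-dimensional case. In that case the formula reads $f=\sum\widehat f(\chi)\chi$ with $\widehat f(\chi)=\langle f,\chi\rangle$, as expected.

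A secondary point is the meaning of "convergence". Because $\widehat G$ may be uncountable when $G$ is non-metrizable, the sum should be read as the net of finite partial sums over finite subsets of $\widehat G$. That net converges to $f$ in $L^2$ by Bessel's inequality together with the completeness of the orthonormal basis from Peter--Weyl.
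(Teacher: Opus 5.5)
Your proof is correct. The entry identification $\langle \widehat f(\sigma)\xi_j,\xi_i\rangle=\langle f,u^\sigma_{ij}\rangle_{L^2}$, the trace expansion, and the regrouping of the unconditionally convergent Peter--Weyl expansion into finite $\sigma$-blocks (read as a net over finite subsets of $\widehat G$) all check out. The paper does not prove this statement; it quotes it from Folland, where it is derived exactly this way, and the paper's proof of Theorem~\ref{theo:plancherel} uses the same identification of $\widehat f(\sigma)_{ij}$ with the Peter--Weyl coefficients.
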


\begin{theorem}\label{scalar:plancherel}
The Fourier transform is an isometric isomorphism from $L^2(G)$ onto
$\ell^2\text{-}\petitoplus\limits_{\sigma\in\widehat{G}}B_2(H_\sigma)$:
$$
\|f\|_2^2=\sum_{\sigma\in\widehat{G}}d_\sigma\,\|\hat f(\sigma)\|_{B_2(H_\sigma)}^2,$$
where $B_2(H_\sigma)$ is the Hilbert-Schmidt class on $H_\sigma$.
\end{theorem}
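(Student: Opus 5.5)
The plan is to derive the isometry directly from the Peter--Weyl theorem, computing each Fourier coefficient in terms of the orthonormal basis $\{\sqrt{d_\sigma}\,u^\sigma_{ij}\}$. Fix $f\in L^2(G)\subset L^1(G)$; the inclusion holds because $m_G$ is a probability measure, so $\widehat f(\sigma)$ is defined. In the fixed basis $\{\xi^\sigma_k\}$, the matrix entries of $\widehat f(\sigma)$ are
\[
\langle \widehat f(\sigma)\xi^\sigma_j,\xi^\sigma_i\rangle=\int_G f(x)\,\langle \sigma(x)^*\xi^\sigma_j,\xi^\sigma_i\rangle\,dm_G(x)=\int_G f(x)\,\overline{u^\sigma_{ij}(x)}\,dm_G(x)=\langle f,u^\sigma_{ij}\rangle_{L^2(G)} .
\]
Here we used $\langle\sigma(x)^*\xi_j,\xi_i\rangle=\overline{\langle\sigma(x)\xi_i,\xi_j\rangle}$. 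Consequently
\[
\|\widehat f(\sigma)\|_{B_2(H_\sigma)}^2=\sum_{i,j=1}^{d_\sigma}|\langle f,u^\sigma_{ij}\rangle|^2,
\qquad\text{so}\qquad
d_\sigma\|\widehat f(\sigma)\|_{B_2}^2=\sum_{i,j}\big|\langle f,\sqrt{d_\sigma}\,u^\sigma_{ij}\rangle\big|^2 .
\]

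Summing over $\sigma\in\widehat G$ and applying Parseval's identity for the orthonormal basis given by Peter--Weyl yields $\|f\|_2^2=\sum_\sigma d_\sigma\|\widehat f(\sigma)\|_{B_2}^2$. This shows that $\mathscr F$ maps $L^2(G)$ isometrically into $\ell^2\text{-}\bigoplus B_2(H_\sigma)$, where the norm on that space is understood with Hilbert--Schmidt norms in place of operator norms.

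For surjectivity, take $(T_\sigma)$ with $\sum_\sigma d_\sigma\|T_\sigma\|_{B_2}^2<\infty$ and set $c^\sigma_{ij}=\langle T_\sigma\xi_j,\xi_i\rangle$. Then $\sum_{\sigma,i,j}|\sqrt{d_\sigma}\,c^\sigma_{ij}|^2<\infty$, so the Riesz--Fischer theorem (completeness of the orthonormal basis) gives $f=\sum_{\sigma,i,j} d_\sigma c^\sigma_{ij}\,u^\sigma_{ij}\in L^2(G)$ with $\langle f,u^\sigma_{ij}\rangle=c^\sigma_{ij}$. By the entry computation above, $\widehat f(\sigma)=T_\sigma$.

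The main obstacle is bookkeeping rather than anything deep: the indices and conjugations must be tracked carefully so that the matrix entries of $\widehat f(\sigma)$ coincide exactly with the Peter--Weyl coefficients, and the normalizing factor $d_\sigma$ must be accounted for correctly. Injectivity on $L^2$ requires no separate argument, since it follows from the isometry.
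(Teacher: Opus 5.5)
Your proof is correct. The paper only recalls this scalar statement as background and gives no proof of it, but its proof of Theorem~\ref{theo:plancherel} follows your route exactly: it identifies the entries $\widehat f(\sigma)_{ij}$ with the coefficients against the Peter--Weyl basis $\sqrt{d_\sigma}\,u^\sigma_{ij}$, applies Parseval, and gets surjectivity from completeness of $L^2$, so taking $\mathcal{H}=\mathbb{C}$ there recovers your argument (and you are right that the norm on $\ell^2\text{-}\bigoplus B_2(H_\sigma)$ must be read with Hilbert--Schmidt rather than operator norms).
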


For $f,g\in L^1(G)$, the convolution of $f$ by $g$ is defined by 
$$
(f*g)(x):=\int_G f(xy^{-1})\,g(y)\,dm_G(y).$$
The following result (convolution theorem) holds:
$$\widehat{f*g}(\sigma)=\widehat{G}(\sigma)\,
\widehat f(\sigma),\quad\sigma\in\widehat{G}.
$$

Let us also mention that for $y\in G$ and $f\in L^p(G)$, 
$$
\widehat{R_yf}(\sigma)=\sigma(y)\,\widehat f(\sigma),\quad\sigma\in\widehat{G}.
$$

\subsection{Pego theorem on compact groups: the scalar case}

In this subsection, we present Pego's theorem as formulated by Kumar in \cite{Kumar2024}. Prior to that, we introduce the definitions essential for its understanding.

Let $(X, d)$ be a metric space.
\begin{definition}
A subset $A$ of $X$ is said to be  {totally bounded} if, for every $\varepsilon > 0$, there exists a finite set of points $\{x_1, x_2, \dots, x_n\} \subset X$ such that:
$$
A \subset \union\limits_{i=1}^{n} B(x_i, \varepsilon),
$$
where $B(a,r)$ denotes the open ball centered at $a$ with radius $r$.
\end{definition}
\begin{definition}
A subset $A$ of $X$ is said to be {precompact} if its topological closure, denoted as $\overline{A}$, is compact.
\end{definition}

\noindent The Hausdorff criterion for precompactness states that a subset of a metric space is precompact if and only if it is totally bounded. 

\begin{definition}
The set $K\subset L^p(G)$ is said to be uniformly $L^p(G)$-equicontinuous if for every
$\varepsilon>0$ there is an open neighborhood $O$ of $e$ such that $\|R_yf-f\|_{L^p(G)}<\varepsilon$
for $f\in K,\ y\in O$.
\end{definition}

Let $B_p(H_\sigma)$ denotes   the Schatten $p$-class of bounded linear operators on $H_\sigma$ endowed with norm  $\|T\|_{B_p(H_\sigma)}=\left(\text{tr}(|T|^p)\right)^{\frac{1}{p}}$.

\begin{definition}
The set $F\subset\ell^p\text{-}\petitoplus\limits_{\sigma\in \widehat{G}} B_p(H_\sigma)$ is said to have uniform
$\ell^p\text{-}\petitoplus\limits_{\sigma\in \widehat{G}} B_p(H_\sigma)$-decay if for every $\varepsilon>0$ there
is a finite set $S\subset\widehat{G}$ such that  $\|\varphi\|_{\ell^p\text{-}\petitoplus_{\sigma \in \widehat{G}\setminus S}B_p(H_\sigma)}<\varepsilon$ for all $\varphi\in F$. 

\end{definition}

\begin{theorem} \cite[Theorem 1.1]{Kumar2024}\label{thm:kumar}
Let $K$ be a bounded subset of $L^2(G)$. The following assertions are equivalent:
\begin{enumerate}
\item[(i)] $K$
is precompact;
\item[(ii)] $K$ is uniformly $L^2(G)$-equicontinuous;
\item[(iii)] $\widehat K$ has uniform $\ell^2\text{-}\petitoplus\limits_{\sigma \in \widehat{G}} B_2(H_\sigma)$-decay.
\end{enumerate}
\end{theorem}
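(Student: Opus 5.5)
The plan is to run everything through the Plancherel identity of Theorem~\ref{scalar:plancherel} together with the translation rule $\widehat{R_yf}(\sigma)=\sigma(y)\widehat f(\sigma)$. Write $C=\sup_{f\in K}\|f\|_2<\infty$. I would prove (i)$\Rightarrow$(ii)$\Rightarrow$(iii)$\Rightarrow$(i).

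\emph{(i)$\Rightarrow$(ii).} Fix $\varepsilon>0$. By the Hausdorff criterion, cover $K$ by finitely many balls $B(f_k,\varepsilon/3)$, $k=1,\dots,n$. For a single $f\in L^2(G)$, the map $y\mapsto R_yf$ is continuous into $L^2(G)$. This is standard: approximate $f$ in $L^2$ by a continuous function, which is uniformly continuous on the compact group, and use Proposition~\ref{proposition:translation-isometry}. So take a neighborhood $O_k$ of $e$ with $\|R_yf_k-f_k\|_2<\varepsilon/3$, and set $O=\bigcap_k O_k$. For $f\in K$ with $\|f-f_k\|_2<\varepsilon/3$, the isometry of $R_y$ and the triangle inequality give $\|R_yf-f\|_2<\varepsilon$.

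\emph{(ii)$\Rightarrow$(iii).} This is the step I expect to be the main obstacle, since it converts a condition holding near $e$ into decay at infinity of $\widehat G$. By Plancherel,
\[
\|R_yf-f\|_2^2=\sum_\sigma d_\sigma\|(\sigma(y)-I)\widehat f(\sigma)\|_{B_2}^2 .
\]
Rather than a pointwise bound, I would average in $y$ against a smoothing kernel. Choose a continuous $\varphi\ge0$ supported in $O$ with $\int\varphi=1$, and set $g_f=\int\varphi(y)R_yf\,dy$. Then $\|g_f-f\|_2\le\varepsilon$ by Minkowski, and $\widehat{g_f}(\sigma)=A_\sigma\widehat f(\sigma)$ with $A_\sigma=\int\varphi(y)\sigma(y)\,dy$. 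Up to inversion, $A_\sigma$ is $\widehat\varphi(\sigma)$, so by Riemann--Lebesgue there is a finite $S\subset\widehat G$ with $\|A_\sigma\|\le\varepsilon/C$ for $\sigma\notin S$. Hence, for $\sigma\notin S$,
\[
\|\widehat f(\sigma)\|_{B_2}\le\|\widehat f(\sigma)-A_\sigma\widehat f(\sigma)\|_{B_2}+\|A_\sigma\widehat f(\sigma)\|_{B_2}.
\]
Squaring, summing over $\sigma\notin S$ with weights $d_\sigma$, and using Plancherel on $f-g_f$ gives
\[
\Big(\sum_{\sigma\notin S}d_\sigma\|\widehat f(\sigma)\|_{B_2}^2\Big)^{1/2}\le\varepsilon+(\varepsilon/C)\,C=2\varepsilon ,
\]
uniformly in $f\in K$.

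\emph{(iii)$\Rightarrow$(i).} Given $\varepsilon$, take $S$ from the decay condition and let $P_S$ be the orthogonal projection onto the span of the coefficients $u^\sigma_{ij}$ with $\sigma\in S$. This span is finite-dimensional by Peter--Weyl, since each $d_\sigma<\infty$. Then $P_S(K)$ is bounded in a finite-dimensional space, hence totally bounded. By Plancherel, $\|f-P_Sf\|_2<\varepsilon$ for all $f\in K$, so an $\varepsilon$-net of $P_S(K)$ is a $2\varepsilon$-net of $K$. The Hausdorff criterion then gives precompactness.
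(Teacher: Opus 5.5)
Your proof is correct and follows essentially the same route as the paper's proof of Theorem~\ref{thm:main} specialized to $\mathcal{H}=\mathbb{C}$; the paper itself only cites this statement from Kumar. The three steps match: (i)$\Rightarrow$(ii) via a finite net and continuity of translation, (ii)$\Rightarrow$(iii) by mollifying with a kernel supported near $e$ as in Lemma~\ref{lem:decay}, and (iii)$\Rightarrow$(i) via the finite-dimensional band-limited projection $P_S$. The only differences are minor: you use a continuous mollifier together with Riemann--Lebesgue and the bound $\sup_K\|f\|_2$ where the paper uses an indicator kernel with Plancherel and absorption, and by closing the cycle directly you do not need the paper's separate (iii)$\Rightarrow$(ii) argument (Lemma~\ref{lem:equicont}).
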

\section{Pego theorem for Hilbert-space-valued functions}\label{sec: main results}
We now establish several results, leading up to the generalization of  Theorem~\ref{thm:kumar} to Hilbert space-valued functions.

 Let $G$ be a compact group. Fix a  complex Hilbert space $\mathcal{H}$. The Bochner spaces of $p$-strongly integrable $\mathcal{H}$-valued functions on $G$ are denoted by $L^p(G,\mathcal{H}), \, 1\le p<\infty$. The norm on $L^p(G,\mathcal{H})$ is given by 
 
$$ \|f\|_{L^p(G,\mathcal{H})}=\left(\int_G\|f(x)\|_{\mathcal{H}}^pdm_G(x)\right)^{\frac{1}{p}}.$$
In particular, $L^2(G,\mathcal{H})$ is a complex  Hilbert space with the inner product
$$\langle f,g\rangle_{L^2(G,\mathcal{H})}=\int_G\langle f(x),g(x)\rangle_{\mathcal{H}}dm_G(x).$$
\begin{definition}\cite{Assiamoua1989}
For $f\in L^1(G,\mathcal{H})$, the Fourier transform  $\widehat f=(\widehat f(\sigma))_{\sigma\in\widehat{G}}$
is the family of sesquilinear maps $\widehat f(\sigma):H_\sigma\times H_\sigma\to\mathcal{H}$ given by 
\[
\widehat f(\sigma)(\xi,\eta)=\int_G\langle\sigma(x)^*\xi,\eta\rangle_{H_\sigma}\,f(x)\,dm_G(x),
\qquad\xi,\eta\in H_\sigma,
\]
where $\langle \cdot, \cdot\rangle_{H_\sigma}$ is the scalar product in $H_\sigma$.
\end{definition}

For $f\in L^2(G,\mathcal{H})$, the inversion formula holds:
\begin{equation}\label{eq:inversionformula}
f(x)=\sum_{\sigma\in\widehat{G}}d_\sigma\sum_{i=1}^{d_\sigma}\sum_{j=1}^{d_\sigma}u^\sigma_{ij}(x)\widehat f(\sigma)(\xi_j^\sigma,
\xi_i^\sigma),\quad x\in G.
\end{equation}

In the sequel, we will write
$$\varphi(\sigma)_{ij}=\varphi(\sigma)(\xi_j^\sigma,
\xi_i^\sigma).$$ In particular, 
$$\widehat f(\sigma)_{ij}=\widehat f(\sigma)(\xi_j^\sigma,
\xi_i^\sigma).$$

We consider the Assiamoua spaces $\mathscr{S}_p(\widehat{G}, \mathcal{H}),\, 1\le p\le \infty$, defined as follows \cite{Mensah2024}:
\begin{itemize}
\item for $1\le p<\infty$, 
$$\mathscr{S}_p(\widehat{G},\mathcal{H})=\Big\{\varphi=(\varphi(\sigma))_{\sigma\in \widehat{G}}:
\sum_{\sigma\in \widehat{G}} d_\sigma\sum_{i=1}^{d_\sigma}\sum_{j=1}^{d_\sigma}\|\varphi(\sigma)_{ij}\|_\mathcal{H}^p<\infty\Big\},
$$
with the norm 
$$ \|\varphi\|_{\mathscr{S}_p(\widehat{G},\mathcal{H})}=\left(\sum_{\sigma\in \widehat{G}} d_\sigma\sum_{i=1}^{d_\sigma}\sum_{j=1}^{d_\sigma}\|\varphi(\sigma)_{ij}\|_\mathcal{H}^p\right)^{\frac{1}{p}}.
$$
\item $$\mathscr{S}_\infty(\widehat{G},\mathcal{H})=\Big\{  \varphi=(\varphi(\sigma))_{\sigma\in \widehat{G}}: \sup_{\sigma \in \widehat{G}}\|\varphi(\sigma)\| <\infty \Big\},$$

with the norm 

$$\|\varphi\|_{\mathscr{S}_\infty(\widehat{G},\mathcal{H})}=\sup_{\sigma \in \widehat{G}}\|\varphi(\sigma)\|.$$
Here,  
$$\|\varphi(\sigma)\|=\sup\Big\{\|\varphi(\sigma)(\xi,\eta)\|_{\mathcal{H}}: \|\xi\|_{H_\sigma}\leq 1, \|\eta\|_{H_\sigma}\leq 1\Big\}.$$
\end{itemize}

\noindent Each $\mathscr{S}_p(\widehat{G},\mathcal{H}), \, 1\leq p\leq \infty$ is a complex Banach
space, and in particular $\mathscr{S}_2(\widehat{G},\mathcal{H})$ is a Hilbert space with the inner product
$$\langle\varphi,\psi\rangle_{\mathscr{S}_2}=\sum_{\sigma\in \widehat{G}} d_\sigma\sum_{i=1}^{d_\sigma}\sum_{j=1}^{d_\sigma}\langle
\varphi(\sigma)_{ij},\psi(\sigma)_{ij}\rangle_\mathcal{H}.$$

\noindent We provide an alternative proof for \cite[Theorem 4.8]{Assiamoua1989} concerning the analogue of the Plancherel theorem for this Fourier transform. The counter-example following  Theorem~\ref{theo:plancherel} illustrates why we restrict our attention to a Hilbert space-valued functions. 
\begin{theorem}\label{theo:plancherel}
Let $G$ be a compact group and let $\mathcal{H}$ be a complex Hilbert space.   The Fourier transformation $\mathscr{F}:L^2(G,\mathcal{H})\to \mathscr{S}_2(\widehat{G},\mathcal{H})$ is an isometric isomorphism.
\end{theorem}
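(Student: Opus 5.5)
The plan is to reduce everything to the scalar Plancherel theorem (Theorem~\ref{scalar:plancherel}), or equivalently to Peter--Weyl, by computing the matrix entries of $\widehat f(\sigma)$ explicitly. For $f\in L^2(G,\mathcal{H})\subset L^1(G,\mathcal{H})$ (recall that $m_G$ is a probability measure),
\[
\widehat f(\sigma)_{ij}=\int_G\langle \sigma(x)^*\xi_j^\sigma,\xi_i^\sigma\rangle f(x)\,dm_G(x)=\int_G \overline{u^\sigma_{ij}(x)}\,f(x)\,dm_G(x),
\]
since $\langle\sigma(x)^*\xi_j,\xi_i\rangle=\langle\xi_j,\sigma(x)\xi_i\rangle=\overline{u^\sigma_{ij}(x)}$. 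Writing $e^\sigma_{ij}=\sqrt{d_\sigma}\,u^\sigma_{ij}$, this says $\sqrt{d_\sigma}\,\widehat f(\sigma)_{ij}$ is the $\mathcal{H}$-valued ``coefficient'' of $f$ along the orthonormal basis element $e^\sigma_{ij}$. The norm in $\mathscr{S}_2$ then reads $\sum_{\sigma,i,j}\|\sqrt{d_\sigma}\widehat f(\sigma)_{ij}\|_{\mathcal H}^2$. So the statement becomes the vector-valued Parseval identity
\[
\int_G\|f\|_{\mathcal H}^2=\sum_{\sigma,i,j}\Big\|\int_G \overline{e^\sigma_{ij}}\,f\Big\|^2_{\mathcal H}.
\]

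For the isometry, I first treat simple tensors $f=g\otimes v$ with $g\in L^2(G)$ and $v\in\mathcal H$. Then the coefficients are $\langle g,e^\sigma_{ij}\rangle v$, and the identity is scalar Parseval times $\|v\|^2$. Better, I fix an orthonormal basis $(v_\alpha)$ of $\mathcal H$ and set $f_\alpha=\langle f(\cdot),v_\alpha\rangle\in L^2(G)$. Bochner integrals commute with bounded functionals, so $\langle \widehat f(\sigma)_{ij},v_\alpha\rangle=\widehat{f_\alpha}$-coefficient $=\langle f_\alpha,e^\sigma_{ij}\rangle/\sqrt{d_\sigma}$. Then
\[
\sum_{\sigma,i,j}d_\sigma\|\widehat f(\sigma)_{ij}\|^2=\sum_{\sigma,i,j}\sum_\alpha|\langle f_\alpha,e^\sigma_{ij}\rangle|^2=\sum_\alpha\|f_\alpha\|_2^2=\int_G\sum_\alpha|\langle f(x),v_\alpha\rangle|^2\,dm_G=\|f\|^2_{L^2(G,\mathcal H)}.
\]
This uses Tonelli to swap the nonnegative sums and the integral, Parseval in $\mathcal H$ pointwise, and Peter--Weyl for each $f_\alpha$. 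This step is exactly where the Hilbert structure of $\mathcal H$ enters, and it is the one to state carefully. Nonseparable $\mathcal H$ causes no trouble: a Bochner-measurable $f$ is essentially separably valued, so only countably many $f_\alpha$ are nonzero, and the sums remain sums of nonnegative terms anyway.

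For surjectivity, I take $\varphi\in\mathscr{S}_2(\widehat G,\mathcal H)$ and define $f=\sum_{\sigma,i,j}d_\sigma u^\sigma_{ij}\,\varphi(\sigma)_{ij}$, as suggested by \eqref{eq:inversionformula}. The terms $e^\sigma_{ij}\otimes\sqrt{d_\sigma}\varphi(\sigma)_{ij}$ are mutually orthogonal in $L^2(G,\mathcal H)$ by Schur orthogonality, since $\langle g\otimes v,h\otimes w\rangle=\langle g,h\rangle\langle v,w\rangle$. Their squared norms sum to $\|\varphi\|_{\mathscr S_2}^2<\infty$, so the series converges unconditionally in the Hilbert space $L^2(G,\mathcal H)$. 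The Fourier transform is continuous from $L^2$: each coefficient map is bounded by Cauchy--Schwarz. Computing termwise with Schur orthogonality gives $\widehat f(\sigma)_{ij}=\varphi(\sigma)_{ij}$, hence $\widehat f=\varphi$. A sesquilinear map on $H_\sigma$ is determined by its matrix entries in the fixed basis.

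Injectivity follows from the isometry, and linearity is clear. The main obstacle is the bookkeeping of the interchange of $\int_G$, $\sum_\alpha$ and $\sum_{\sigma,i,j}$ and the justification that $\langle\int \cdot\, f,v\rangle=\int\langle f,v\rangle$. All of these are standard once the problem is phrased as orthogonal expansion, so the proof should be short.
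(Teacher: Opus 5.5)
Your proof is correct and follows essentially the same route as the paper: you use the same coefficient identity $\langle \widehat f(\sigma)_{ij},v_\alpha\rangle_{\mathcal{H}}=\langle f_\alpha,u^\sigma_{ij}\rangle_{L^2(G)}$ and the same orthogonal-series construction for surjectivity. The one difference is in the isometry step. The paper applies Parseval directly to the product system $\{\sqrt{d_\sigma}\,u^\sigma_{ij}\otimes h_k\}$, taking its completeness from the identification $L^2(G,\mathcal{H})\cong L^2(G)\otimes\mathcal{H}$, whereas you apply scalar Peter--Weyl Parseval to each coordinate $f_\alpha$ and then use Tonelli and pointwise Parseval in $\mathcal{H}$, which proves that completeness rather than assuming it (your essential-separability remark correctly covers nonseparable $\mathcal{H}$).
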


\begin{proof}
Fix an orthonormal basis $\{h_k\}_{k\in \mathfrak{K}}$ of the Hilbert space $\mathcal{H}$. By the Peter-Weyl theorem,  the family $\{\sqrt{d_\sigma}\,u^\sigma_{ij} :
\sigma\in\widehat{G},\ 1\le i,j\le d_\sigma\}$ is an orthonormal basis of $L^2(G)$.  Since
$L^2(G,\mathcal{H})$ is isomorphic to $L^2(G)\otimes\mathcal{H}$ canonically as Hilbert spaces, the family
\[
\big\{\sqrt{d_\sigma}\,u^\sigma_{ij}\otimes h_k \;:\; \sigma\in\widehat{G},\ 1\le i,j\le
d_\sigma,\ k\in \mathfrak{K}\big\}
\]
is an orthonormal basis of $L^2(G,\mathcal{H})$. For $f\in  L^2(G,\mathcal{H})$,  the basis coefficient $c^\sigma_{ij,k}$ of $f$ against the vector $\sqrt{d_\sigma}\,u^\sigma_{ij}\otimes h_k$ is
\begin{equation}\label{Coefficient}
c^\sigma_{ij,k}=\big\langle f,\ \sqrt{d_\sigma}\,u^\sigma_{ij}\otimes h_k
\big\rangle_{L^2(G,\mathcal{H})}
=\sqrt{d_\sigma}\int_G \overline{u^\sigma_{ij}(x)}\,\big\langle f(x),h_k\big\rangle_\mathcal{H}
\,dm_G(x).
\end{equation}

Now, we aim to  relate $c^\sigma_{ij,k}$ to the Fourier coefficient $\widehat{f}(\sigma)_{ij}$.
We have
\[
\big\langle\sigma(x)^*\xi_j^\sigma,\xi_i^\sigma\big\rangle_{H_\sigma}
=\big\langle\xi_j^\sigma,\sigma(x)\xi_i^\sigma\big\rangle_{H_\sigma}
=\overline{\big\langle\sigma(x)\xi_i^\sigma,\xi_j^\sigma\big\rangle}_{H_\sigma}
=\overline{u^\sigma_{ij}(x)}.
\]

Hence, directly from the definition of the Fourier transform, we can write 
\[
\widehat f(\sigma)_{ij}
=\int_G\overline{u^\sigma_{ij}(x)}\,f(x)\,dm_G(x).
\]
Taking the inner product with $h_k$ and compairing with (\ref{Coefficient}), we obtain 
\[
\big\langle\widehat f(\sigma)_{ij},h_k\big\rangle_\mathcal{H}
=\int_G\overline{u^\sigma_{ij}(x)}\,\big\langle f(x),h_k\big\rangle_\mathcal{H}\,dm_G(x)
=\frac{c^\sigma_{ij,k}}{\sqrt{d_\sigma}},
\]
i.e.
\begin{equation}\label{CoefficientC}
c^\sigma_{ij,k}=\sqrt{d_\sigma}\;\big\langle \widehat f(\sigma)_{ij},h_k\big\rangle_\mathcal{H}.
\end{equation}
So the basis coefficients of $f$ are, up to the fixed scalar factor $\sqrt{d_\sigma}$,
exactly the coordinates of the Fourier coefficients $\widehat f(\sigma)_{ij}$ against the basis $\{h_k\}$ of $\mathcal{H}$.

\noindent Applying Parseval's identity for
the orthonormal basis
 $\{\sqrt{d_\sigma}u^\sigma_{ij}\otimes h_k:\sigma\in \widehat{G}, 1\le i,j\le d_\sigma, k\in \mathfrak{K}\}$ of $L^2(G,\mathcal{H})$
by using (\ref{CoefficientC}), we attain
\[
\|f\|_{L^2(G,\mathcal{H})}^2=\sum_{\sigma\in \widehat{G}}\sum_{i=1}^{d_\sigma}\sum_{j=1}^{d_\sigma}\sum_{k\in \mathfrak{K}}|c^\sigma_{ij,k}|^2
=\sum_{\sigma\in \widehat{G}}d_\sigma\sum_{i=1}^{d_\sigma}\sum_{j=1}^{d_\sigma}\sum_{k\in \mathfrak{K}}\big|\langle\widehat f(\sigma)_{ij},h_k\rangle_\mathcal{H}\big|^2.
\]
For each fixed $\sigma,i,j$, Parseval's identity in $\mathcal{H}$ itself, applied to the vector
$\widehat f(\sigma)_{ij}\in\mathcal{H}$ against the orthonormal basis $\{h_k:k\in \mathfrak{K}\}$, gives
\[
\sum_{k\in \mathfrak{K}}\big|\langle \widehat f(\sigma)_{ij},h_k\rangle_\mathcal{H}\big|^2=\|\widehat f(\sigma)_{ij}\|_\mathcal{H}^2.
\]
Substituting back,
\[
\|f\|_{L^2(G,\mathcal{H})}^2=\sum_{\sigma\in \widehat{G}}d_\sigma\sum_{i=1}^{d_\sigma}\sum_{j=1}^{d_\sigma}\|\widehat f(\sigma)_{ij}\|_\mathcal{H}^2
=\|\widehat f\|_{\mathscr{S}_2(\widehat{G},\mathcal{H})}^2.
\]

Therefore, the Fourier transformation $\mathscr{F}$ is an isometry, hence  an  injection. 
\smallskip

Furthermore, given $\varphi\in \mathscr{S}_2(\widehat{G},\mathcal{H})$, that is,  $$\sum_{\sigma\in \widehat{G}}d_\sigma\sum_{i=1}^{d_\sigma}\sum_{j=1}^{d_\sigma}\|\varphi(\sigma)_{ij}\|_\mathcal{H}^2<\infty, $$
 define scalars $q^\sigma_{ij,k}$ by
$q^\sigma_{ij,k}=\sqrt{d_\sigma}\langle\varphi(\sigma)_{ij},h_k\rangle_\mathcal{H}$. By the same computation as above, $$\sum_{\sigma\in \widehat{G}}\sum_{i=1}^{d_\sigma}\sum_{j=1}^{d_\sigma}\sum_{k\in \mathfrak{K}}
|q^\sigma_{ij,k}|^2=\|\varphi\|_{\mathscr{S}_2(\widehat{G},\mathcal{H})}^2<\infty.$$
Since $L^2(G,\mathcal{H})$ is complete,  the series
\[
\sum_{\sigma\in \widehat{G}}\sum_{i=1}^{d_\sigma}\sum_{j=1}^{d_\sigma}\sum_{k\in \mathfrak{K}}q^\sigma_{ij,k}\;\sqrt{d_\sigma}\,u^\sigma_{ij}\otimes h_k
\]
converges in $L^2(G,\mathcal{H})$ to a well-defined element $g$, and  
 $\widehat{G}(\sigma)_{ij}=\varphi(\sigma)_{ij}$ for every $\sigma,i,j$,
i.e.\ $\widehat{G}=\varphi$. Hence, $f\mapsto\widehat f$ is surjective. 
\end{proof}
In Theorem~\ref{theo:plancherel}, the assumption that \(\mathcal{H}\) is a Hilbert space is essential, without which the Fourier transform may fail to be an isometry, as illustrated by the following counter-example.

\begin{counterexample}\label{ex:planch-fails-Cstar}{\rm
Take $G=\mathbb T$ be the 1-dimensional torus (here $\widehat{G}=\mathbb Z$ and the degrees of the characters are equal to 1)  and $\mathcal{H}=\mathbb C^2$ equipped with
the norm $\|(z_1,z_2)\|_{\mathbb C^2}=\max(|z_1|,|z_2|)$. This norm on $\mathbb C^2$ does not satisfy the parallelogram law, so $\left(\mathbb C^2,\|\cdot\|_{\mathbb C^2} \right)$ is not a Hilbert space. 

Let $u_1(x)=e^{ix}$, $u_2(x)=e^{2ix}$ be orthonormal characters of
$\mathbb T$, and set
$$
h(x)=a_1u_1(x)+a_2u_2(x)$$
where $a_1=(1,0)$ and $a_2=(0,1)$.
Then $h(x)=(e^{ix},e^{2ix})$, so $\|h(x)\|_{\mathbb C^2}=\max(1,1)=1$ for every $x\in\mathbb T$,
giving
\[
\|h\|_{L^2(\mathbb T,\mathbb C^2)}^2=1.
\]

\noindent But  the Fourier coefficients computed directly are $\widehat h(u_1)=a_1$, $\widehat h(u_2)=a_2$,
and all other coefficients vanish. So, 
\[
\|\widehat h\|_{\mathscr{S}_2(\mathbb Z,\mathbb C^2)}^2=\|a_1\|_{\mathbb C^2}^2+\|a_2\|_{\mathbb C^2}^2=1+1=2\neq 1
=\|h\|_{L^2(\mathbb T,\mathbb C^2)}^2.
\]
 Therefore, the  Fourier transform is not
an isometry from $L^2(\mathbb T,\mathbb C^2)$ into $\mathscr{S}_2(\mathbb Z,\mathbb C^2)$. 
}\end{counterexample}
Counter-example~\ref{ex:planch-fails-Cstar}  provides a negative answer to Question 2 in \cite{Mensah2024}. 

\smallskip

The following result is the analogue of the Hausdorff-Young inequality. We prove it via an interpolation method, bypassing the theory of Fourier type \cite{GarciaRubio, Peetre}.

\begin{theorem}\label{prop:HY-final}
Let $G$  be a compact group and  $\mathcal{H}$ a complex Hilbert space. Let   $1\le p\le 2$. If $f\in L^p(G,\mathcal{H})$, then
\[
\|\widehat f\|_{\mathscr{S}_{p'}(\widehat{G},\mathcal{H})}\le\|f\|_{L^p(G,\mathcal{H})},\quad\text{where }
\frac1p+\frac1{p'}=1.
\]
\end{theorem}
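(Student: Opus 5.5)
The plan is to prove the two endpoint estimates $p=1$ and $p=2$, and then obtain the intermediate range by complex interpolation (Calder\'on's method, \cite{BerghLofstrom}) applied to the linear map $\mathscr{F}:f\mapsto \widehat f$. To make the interpolation clean, I would not work with $\mathscr{S}_\infty(\widehat{G},\mathcal{H})$ directly. Instead I would regard every $\varphi$ as an $\mathcal{H}$-valued function on the discrete index set
\[
I=\{(\sigma,i,j):\sigma\in\widehat{G},\ 1\le i,j\le d_\sigma\},
\]
equipped with the weighted counting measure $\mu(\{(\sigma,i,j)\})=d_\sigma$. With this convention, for $1\le q<\infty$ we have $\mathscr{S}_q(\widehat{G},\mathcal{H})=L^q(I,\mu;\mathcal{H})$ isometrically, directly from the definitions.

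\textbf{Endpoint $p=2$.} This is exactly Theorem~\ref{theo:plancherel}: we have $\|\widehat f\|_{L^2(I,\mu;\mathcal{H})}=\|f\|_{L^2(G,\mathcal{H})}$.

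\textbf{Endpoint $p=1$.} I would prove the entrywise bound $\sup_{(\sigma,i,j)\in I}\|\widehat f(\sigma)_{ij}\|_\mathcal{H}\le \|f\|_{L^1(G,\mathcal{H})}$. The point is that for unit vectors $\xi,\eta\in H_\sigma$, unitarity of $\sigma(x)$ gives $|\langle\sigma(x)^*\xi,\eta\rangle_{H_\sigma}|\le 1$. The Bochner integral estimate $\|\int g\|\le\int\|g\|$ then yields
\[
\|\widehat f(\sigma)(\xi,\eta)\|_\mathcal{H}\le\|f\|_{L^1(G,\mathcal{H})}.
\]
This simultaneously gives $\|\widehat f\|_{\mathscr{S}_\infty}\le\|f\|_1$ and the entrywise bound (take $\xi=\xi_j^\sigma$, $\eta=\xi_i^\sigma$). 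Note that the $L^\infty(I,\mu;\mathcal{H})$ norm does not see the weights $d_\sigma$, so $\mathscr{F}:L^1(G,\mathcal{H})\to L^\infty(I,\mu;\mathcal{H})$ has norm at most $1$.

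\textbf{Interpolation.} Fix $1<p<2$ and define $\theta\in(0,1)$ by $\frac1p=\frac{1-\theta}{1}+\frac{\theta}{2}$, so that $\frac1{p'}=\frac{1-\theta}{\infty}+\frac{\theta}{2}$. Since $G$ is a probability space, $L^2(G,\mathcal{H})\subset L^1(G,\mathcal{H})$, so $\mathscr{F}$ is a single linear map on the sum space, and the two endpoint bounds have constant $1$. I then invoke the vector-valued Lebesgue-space interpolation theorem \cite[Theorem 5.1.2]{BerghLofstrom}:
\[
[L^1(G,\mathcal{H}),L^2(G,\mathcal{H})]_\theta=L^p(G,\mathcal{H}),\qquad [L^\infty(I,\mu;\mathcal{H}),L^2(I,\mu;\mathcal{H})]_\theta=L^{p'}(I,\mu;\mathcal{H}),
\]
both with equal norms. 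The exactness of the complex method of exponent $\theta$ then gives
\[
\|\widehat f\|_{\mathscr{S}_{p'}}\le 1^{1-\theta}1^{\theta}\|f\|_{L^p(G,\mathcal{H})}.
\]
The hypothesis that $\mathcal{H}$ is Hilbert enters only through the $p=2$ endpoint, which is consistent with Counter-example~\ref{ex:planch-fails-Cstar}.

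\textbf{Main obstacle.} The delicate step is the identification of the interpolation spaces, on the sequence side in particular. The cited theorem requires $1\le p_0,p_1<\infty$ (or a careful treatment when one exponent is $\infty$). Here the endpoint $p_0=\infty$ forces us to use either $c_0$-type closures or the fact that for $\theta>0$ the Calder\'on space of the couple $(L^\infty,L^2)$ is still $L^{p'}$. If that technical point becomes awkward, my fallback is to avoid abstract identification altogether and run the Riesz--Thorin three-lines argument by hand. Starting from a finitely supported $\psi\in L^p(I,\mu;\mathcal{H})$-dual element and a simple function $f$, I would build analytic families $f_z(x)=\|f(x)\|^{p(\frac{1-z}{1}+\frac z2)}\,f(x)/\|f(x)\|$ and the corresponding $\psi_z$ for the dual pairing $\sum_I d_\sigma\langle \widehat{f_z}(\sigma)_{ij},\psi_z(\sigma,i,j)\rangle_\mathcal{H}$. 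I would then bound this pairing on $\operatorname{Re}z=0$ by the $p=1$ estimate and on $\operatorname{Re}z=1$ by Plancherel. Density of simple functions in $L^p(G,\mathcal{H})$, together with the continuity of $\mathscr{F}$ into $\mathscr{S}_\infty$, completes the extension.
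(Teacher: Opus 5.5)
Your proposal is correct and is essentially the paper's proof: the $p=1$ endpoint comes from $|\langle\sigma(x)^*\xi,\eta\rangle_{H_\sigma}|\le 1$, the $p=2$ endpoint from Theorem~\ref{theo:plancherel}, and the intermediate range from complex interpolation via \cite[Theorem 5.1.2]{BerghLofstrom} on the index set $\{(\sigma,i,j)\}$ with counting measure weighted by $d_\sigma$. You also make explicit two points the paper passes over. First, the paper's $\mathscr{S}_\infty$-norm is the sesquilinear-form norm, which only dominates the entrywise $L^\infty(I,\mu;\mathcal{H})$-norm that actually enters the interpolation. Second, the $L^\infty$ endpoint is not literally covered by the finite-exponent formulation of the cited theorem, which your three-lines fallback addresses.
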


\begin{proof}
If $f\in L^1(G,\mathcal{H})$, then $\widehat{f}\in \mathscr{S}_\infty(\widehat{G},\mathcal{H})$. Indeed,
for every $\sigma\in\widehat{G}$ and $\xi,\eta\in H_\sigma$ with
$\|\xi\|_{H_\sigma}=\|\eta\|_{H_\sigma}=1$, we have
\begin{align*}
\|\hat f(\sigma)(\xi,\eta)\|_\mathcal{H}&=\Big\|\int_G\langle\sigma(x)^*\xi,\eta\rangle_{H_\sigma}\,
f(x)\,dm_G(x)\Big\|_\mathcal{H}\\
&\le \int_G|\langle\sigma(x)^*\xi,\eta\rangle_{H_\sigma}|
\|f(x)\|_\mathcal{H}dm_G(x)\\
&\le\int_G\|f(x)\|_\mathcal{H}\,dm_G(x)\\
&=\|f\|_{L^1(G,\mathcal{H})},
\end{align*}
using meanwhile the Cauchy-Schwarz inequality and the fact that  $\sigma(x)^*$ is unitary to obtain $|\langle\sigma(x)^*\xi,\eta\rangle_{H_\sigma}|\le1$.
Hence, 
\begin{equation}\label{OneInfinity}
\|\widehat f\|_{\mathscr{S}_\infty(\widehat{G},\mathcal{H})}\le\|f\|_{L^1(G,\mathcal{H})}. 
\end{equation}
Moreover, by Theorem~\ref{theo:plancherel}, we have 
\begin{equation}
\|\widehat f\|_{\mathscr{S}_2(\widehat{G},\mathcal{H})}=\|f\|_{L^2(G,\mathcal{H})}. 
\end{equation}

Now, using  \cite[Theorem 5.1.2]{BerghLofstrom} related to the  interpolation of  Bochner $L^p$-spaces,  we obtain  that if $1\leq p\leq 2$, then  
$$
\|\widehat f\|_{\mathscr{S}_{p'}(\widehat{G},\mathcal{H})}\le\|f\|_{L^p(G,\mathcal{H})},
$$
after noting that  each  space  $\mathscr{S}_p(\widehat{G},\mathcal{H})$ carries a  counting measure 
weighted by $d_\sigma$ on the discrete index set $\{(\sigma,i,j):\sigma\in \widehat{G}, 1\le i,j\le d_\sigma\}$. 
\end{proof}

We now prove the analogue of the inverse Hausdorff-Young inequality.
\begin{theorem}\label{theo:invHY}
Let $G$ be a compact group,  $\mathcal{H}$  a complex Hilbert space  and $1\le p\le2$. The inversion map $\varphi\mapsto\varphi^\vee$, defined by
\begin{equation}
\varphi^\vee(x)=\sum_{\sigma\in\widehat{G}}d_\sigma\sum_{i=1}^{d_\sigma}\sum_{j=1}^{d_\sigma}u^\sigma_{ij}(x)\varphi(\sigma)_{ij},\quad x\in G,
\end{equation}
extends to a bounded linear operator $\mathscr{S}_p(\widehat{G},\mathcal{H})\to L^{p'}(G,\mathcal{H})$, with
\[
\|\varphi^\vee\|_{L^{p'}(G,\mathcal{H})}\le\|\varphi\|_{\mathscr{S}_p(\widehat{G},\mathcal{H})},\qquad
\frac1p+\frac1{p'}=1.
\]
\end{theorem}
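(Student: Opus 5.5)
The plan is to mirror the proof of Theorem~\ref{prop:HY-final}: establish the two endpoint estimates $p=1$ and $p=2$ on the dense subspace of finitely supported $\varphi$ (only finitely many nonzero $\varphi(\sigma)_{ij}$), where $\varphi^\vee$ is a trigonometric polynomial with values in $\mathcal{H}$. We then interpolate with \cite[Theorem 5.1.2]{BerghLofstrom} and extend by density. The interpolation is run on the discrete index set $\{(\sigma,i,j):\sigma\in\widehat{G},\,1\le i,j\le d_\sigma\}$ with the weighted counting measure $\mu(\{(\sigma,i,j)\})=d_\sigma$. With this measure, $\mathscr{S}_p(\widehat{G},\mathcal{H})$ is exactly the Bochner space $L^p(\mu,\mathcal{H})$, so the complex interpolation space $[\mathscr{S}_1,\mathscr{S}_2]_\theta$ equals $\mathscr{S}_p$ isometrically, with $\frac1p=\frac{1-\theta}{1}+\frac{\theta}{2}$. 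On the target side, $[L^\infty(G,\mathcal{H}),L^2(G,\mathcal{H})]_\theta=L^{p'}(G,\mathcal{H})$.

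The $p=2$ endpoint comes from Theorem~\ref{theo:plancherel}. For finitely supported $\varphi$, the function $\varphi^\vee$ is the finite sum $\sum_{\sigma,i,j,k} q^\sigma_{ij,k}\sqrt{d_\sigma}\,u^\sigma_{ij}\otimes h_k$ built in the surjectivity part of that proof. Its Fourier transform is therefore $\varphi$, which gives $\|\varphi^\vee\|_{L^2(G,\mathcal{H})}=\|\varphi\|_{\mathscr{S}_2}$. For general $\varphi\in\mathscr{S}_2$, the series defining $\varphi^\vee$ converges in $L^2(G,\mathcal{H})$ and $\varphi^\vee=\mathscr{F}^{-1}\varphi$.

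The $p=1$ endpoint is the step where some care is needed. Crudely, $\|\varphi^\vee(x)\|_\mathcal{H}\le\sum_\sigma d_\sigma\sum_{i,j}|u^\sigma_{ij}(x)|\,\|\varphi(\sigma)_{ij}\|_\mathcal{H}$, and since $|u^\sigma_{ij}(x)|\le 1$ this is at most $\sum_\sigma d_\sigma\sum_{i,j}\|\varphi(\sigma)_{ij}\|_\mathcal{H}=\|\varphi\|_{\mathscr{S}_1}$. Hence $\|\varphi^\vee\|_{L^\infty(G,\mathcal{H})}\le\|\varphi\|_{\mathscr{S}_1}$ with constant exactly $1$. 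The same bound shows that the series converges absolutely and uniformly for every $\varphi\in\mathscr{S}_1$, so $\varphi^\vee$ is a continuous $\mathcal{H}$-valued function. Since $\mathscr{S}_1\subset\mathscr{S}_2$ (because $d_\sigma\ge1$, the terms are small, and $\ell^1\subset\ell^2$ for the weighted counting measure with weights $\ge1$), this definition agrees with the $L^2$ one.

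With both endpoints in hand with constant $1$, the Riesz--Thorin/Calder\'on theorem for Bochner spaces yields $\|\varphi^\vee\|_{L^{p'}(G,\mathcal{H})}\le\|\varphi\|_{\mathscr{S}_p}$ for finitely supported $\varphi$ and every $1\le p\le 2$. Finitely supported sequences are dense in $\mathscr{S}_p$ because $p<\infty$. Also $\mathscr{S}_p\subset\mathscr{S}_2$, and $L^{p'}$-convergence implies $L^2$-convergence on a probability space. So the operator extends uniquely and boundedly, and the extension coincides with the series definition (convergent in $L^2$, hence in $L^{p'}$ by the estimate). The main obstacle is justifying that the interpolation theorem applies to the operator defined only on simple/finitely supported elements, and identifying $\mathscr{S}_p$ with a Bochner $L^p$ space so that \cite[Theorem 5.1.2]{BerghLofstrom} applies verbatim. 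Both points are handled as in Theorem~\ref{prop:HY-final}.
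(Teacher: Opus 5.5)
Your proposal is correct and follows essentially the same route as the paper: the $p=1$ endpoint $\|\varphi^\vee\|_{L^\infty(G,\mathcal{H})}\le\|\varphi\|_{\mathscr{S}_1(\widehat{G},\mathcal{H})}$ comes from $|u^\sigma_{ij}|\le 1$ together with uniform absolute convergence, the $p=2$ endpoint comes from Theorem~\ref{theo:plancherel}, and the general case follows by complex interpolation via \cite[Theorem 5.1.2]{BerghLofstrom}, viewing $\mathscr{S}_p(\widehat{G},\mathcal{H})$ as a Bochner space over the $d_\sigma$-weighted counting measure. Your extra steps (working first with finitely supported $\varphi$ and extending by density, and checking $\mathscr{S}_1\subset\mathscr{S}_2$ so the two definitions of $\varphi^\vee$ agree) are sound but not strictly needed, since Plancherel already defines $\varphi^\vee$ on all of $\mathscr{S}_1+\mathscr{S}_2=\mathscr{S}_2$.
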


\begin{proof}
 Let $\varphi\in\mathscr
S_1(\widehat{G},\mathcal{H})$. Then,  $\sum\limits_{\sigma\in\widehat{G}}d_\sigma\sum\limits_{i=1}^{d_\sigma}\sum\limits_{j=1}^{d_\sigma}\|\varphi(\sigma)_{ij}\|_\mathcal{H}<\infty$.
We have   $|u^\sigma_{ij}(x)|=|\langle\sigma(x)\xi_i^\sigma,
\xi_j^\sigma\rangle_{H_\sigma}|\le\|\sigma(x)\|_{B(H_\sigma)}\|\xi_i^\sigma\|_{H_\sigma}\|\xi_j^\sigma\|_{H_\sigma}=1$
for every $x\in G$, using  the Cauchy-Schwarz inequality, the unitarity of $\sigma(x)$ and the fact that  $\|\xi_j^\sigma\|_\sigma=\|\xi_i^\sigma\|_\sigma
=1$. Hence, for every $x$,
\begin{align*}
\sum_{\sigma\in\widehat{G}}d_\sigma\sum_{i=1}^{d_\sigma}\sum_{j=1}^{d_\sigma}\big\|u^\sigma_{ij}(x)\varphi(\sigma)_{ij}\big\|_\mathcal{H}&\le
\sum_{\sigma\in\widehat{G}}d_\sigma\sum_{i=1}^{d_\sigma}\sum_{j=1}^{d_\sigma}\|\varphi(\sigma)_{ij}\|_\mathcal{H}\\
&=\|\varphi\|_{\mathscr{S}_1(\widehat{G},\mathcal{H})}
<\infty,
\end{align*}
independently of $x$. Since $\mathcal{H}$ is complete, this shows that the series defining
$\varphi^\vee(x)$ converges absolutely in $\mathcal{H}$  and the convergence is
uniform in $x$. A uniform limit of the
partial sums, each being a finite sum of continuous $\mathcal{H}$-valued functions $u^\sigma_{ij}$, is itself continuous. So $\varphi^\vee$ is a well-defined element of $C(G,\mathcal{H})\subset
L^\infty(G,\mathcal{H})$, and
\begin{equation}\label{InfinityOne}
\|\varphi^\vee\|_{L^\infty(G,\mathcal{H})}=\sup_{x\in G}\|\varphi^\vee(x)\|_\mathcal{H}\le
\|\varphi\|_{\mathscr{S}_1(\widehat{G},\mathcal{H})}.
\end{equation}
Furthermore,  by Theorem~\ref{theo:plancherel}  the
map $f\mapsto\widehat f$ is an isometric isomorphism from $L^2(G,\mathcal{H})$ onto $\mathscr{S}_2(\widehat{G},\mathcal{H})$.
Its inverse is exactly the map $\varphi\mapsto\varphi^\vee$ above, restricted to $\mathscr{S}_2(\widehat{G},\mathcal{H})$. Hence
\begin{equation}\label{TwoTwo}
\|\varphi^\vee\|_{L^2(G,\mathcal{H})}=\|\varphi\|_{\mathscr{S}_2(\widehat{G},\mathcal{H})},\quad
\varphi\in\mathscr{S}_2(\widehat{G},\mathcal{H}).
\end{equation}

 By interpolating between (\ref{InfinityOne}) and (\ref{TwoTwo}) based on  \cite[Theorem 5.1.2]{BerghLofstrom},   
we obtain for $1\le p\le 2$,
$$
\|\varphi^\vee\|_{L^{p'}(G,\mathcal{H})}\le\|\varphi\|_{\mathscr{S}_p(\widehat{G},\mathcal{H})},\quad
\varphi\in\mathscr{S}_p(\widehat{G},\mathcal{H}).
$$
\end{proof}

For $K\subset L^p(G,\mathcal{H})$,  set 
$\widehat K=\{\widehat f:f\in K\}$.
Next, we present the two primary definitions of this article.
\begin{definition}\label{def:equicont}
The set $K\subset L^p(G,\mathcal{H})$ is  said to be uniformly $L^p(G,\mathcal{H})$-equicontinuous if for every
$\varepsilon>0$ there is an open neighborhood $O$ of $e$ such that $$\|R_yf-f\|_{L^p(G,\mathcal{H})}
<\varepsilon$$ for all $f\in K,\ y\in O$.
\end{definition}

\begin{definition}\label{def:decay}
The set $F\subset \mathscr{S}_p(\widehat{G},\mathcal{H})$ is said to have uniform $ \mathscr{S}_p(\widehat{G},\mathcal{H})$-decay if for every
$\varepsilon>0$ there is a finite set $S\subset\widehat{G}$ such that  $$\|\varphi\|_{\mathscr{S}_p(\widehat{G}\setminus
S,\mathcal{H})}<\varepsilon$$ for all $\varphi\in F$.
\end{definition}

\bigskip

For $f\in L^p(G,\mathcal{H})$ and for the complex-valued function $k\in L^1(G)$, the convolution $f*k$ is defined by 
\begin{equation}\label{Convolution}
(f*k)(x)=\int_G k(y)f(xy^{-1})
dm_G(y).
\end{equation}
 In Fourier domain, we have, for $\sigma\in\widehat{G}$,
\begin{equation}\label{ConvolutionFourierDomain}
\widehat{f*k}(\sigma)=\widehat k(\sigma)\widehat f(\sigma).
\end{equation}
In coordinates,  Equality~\ref{ConvolutionFourierDomain} turns to $$\widehat{f*k}(\sigma)_{ij}=\sum_{\ell=1}^{d_\sigma}\widehat k(\sigma)_{i\ell}
\,\widehat{f}(\sigma)_{\ell j}.$$
This is an ordinary matrix product of the scalar
matrix $\widehat k(\sigma)$ by the $\mathcal{H}$-valued matrix $\widehat{f}(\sigma)$, with
matrix multiplication acting entrywise via the vector space structure of $\mathcal{H}$. 

\bigskip

\begin{lemma}\label{lem:decay}
Let $G$ be a compact group and $\mathcal{H}$ a complex Hilbert space. Let $K\subset L^2(G,\mathcal{H})$ be uniformly $L^2(G,\mathcal{H})$-equicontinuous. Then $\widehat K$
has uniform $\mathscr{S}_2(\widehat{G},\mathcal{H})$-decay.
\end{lemma}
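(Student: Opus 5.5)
The plan is to follow Kumar's scalar argument, approximating each $f\in K$ by a convolution $f*k$ with a fixed approximate identity $k$ that has only finitely many nonzero Fourier coefficients. Plancherel (Theorem~\ref{theo:plancherel}) then turns the $L^2$ approximation into uniform tail control on the Fourier side.

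\emph{Step 1: choose the kernel.} Fix $\varepsilon>0$. By Definition~\ref{def:equicont}, pick an open neighborhood $O$ of $e$ with $\|R_yf-f\|_{L^2(G,\mathcal{H})}<\varepsilon$ for all $f\in K$ and $y\in O$. We may assume $O=O^{-1}$, replacing $O$ by $O\cap O^{-1}$. Let $k_0=\mathbf 1_O/m_G(O)$, which is nonnegative, has integral $1$ and is supported in $O$. Trigonometric polynomials are dense in $L^1(G)$, by Peter--Weyl density in $L^2(G)\supset C(G)$ together with density of $C(G)$ in $L^1(G)$. So choose a trigonometric polynomial $k$ with $\|k-k_0\|_{L^1(G)}<\varepsilon/(1+M)$, where $M=\sup_{f\in K}\|f\|_{L^2}$. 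The statement says nothing about boundedness, but uniform equicontinuity alone does not give it. I would therefore check whether the later use assumes $K$ bounded, as in Theorem~\ref{thm:kumar}, or else avoid $M$ by a different route (see Step 2). Let $S$ be the finite set of $\sigma$ with $\widehat k(\sigma)\neq0$.

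\emph{Step 2: estimate $f-f*k$.} For $f\in K$, Minkowski's integral inequality for Bochner integrals gives
\[
\|f-f*k_0\|_{L^2(G,\mathcal{H})}\le\int_G k_0(y)\,\|f-R_{y^{-1}}f\|_{L^2(G,\mathcal{H})}\,dm_G(y)<\varepsilon .
\]
The translation isometry (the $\mathcal{H}$-valued version of Proposition~\ref{proposition:translation-isometry}) yields $\|f*(k-k_0)\|_{L^2}\le\|k-k_0\|_{L^1}\|f\|_{L^2}<\varepsilon$. Hence $\|f-f*k\|_{L^2}<2\varepsilon$ uniformly over $K$.

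To avoid needing $M$, I would first try to pass to the Fourier side with $k_0$ alone. The issue is that $\widehat{k_0}(\sigma)$ is not finitely supported, but it is small off a finite set by Riemann--Lebesgue. One then needs $\|(I-\widehat{k_0}(\sigma))\widehat f(\sigma)\|$ to control the tail of $\widehat f$, which fails wherever $\widehat{k_0}(\sigma)$ is not small. So boundedness seems genuinely needed, and this is the main obstacle to flag: either $K$ is tacitly bounded, or the proof must be adjusted.

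\emph{Step 3: pass to the Fourier side.} By \eqref{ConvolutionFourierDomain}, $\widehat{f*k}(\sigma)=0$ for $\sigma\notin S$. Plancherel (Theorem~\ref{theo:plancherel}) then gives
\[
\|\widehat f\|_{\mathscr{S}_2(\widehat{G}\setminus S,\mathcal{H})}=\|\widehat f-\widehat{f*k}\|_{\mathscr{S}_2(\widehat{G}\setminus S,\mathcal{H})}\le\|f-f*k\|_{L^2(G,\mathcal{H})}<2\varepsilon
\]
for all $f\in K$. This is the uniform decay required by Definition~\ref{def:decay}.
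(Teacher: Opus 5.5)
Your main line is correct, but only for bounded $K$: you approximate $k_0=\mathbf 1_O/m_G(O)$ in $L^1$ by a trigonometric polynomial $k$, so that $\widehat{f*k}$ vanishes off a finite $S$, and then apply Plancherel. The uniform bound $M$ enters through $\|f*(k-k_0)\|_{L^2}\le\|k-k_0\|_{L^1}\|f\|_{L^2}$. The lemma as stated has no boundedness hypothesis, so the proposal proves a weaker statement. This is harmless for the later uses in Corollary~\ref{cor:equiv} and Theorems~\ref{thm:main} and~\ref{thm:main-tight}, where $K$ is bounded, but it is a gap relative to the lemma itself.

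Your reason for concluding that boundedness is ``genuinely needed'' is mistaken. You say the Fourier-side comparison with $k_0$ fails wherever $\widehat{k_0}(\sigma)$ is not small. But the finite set $S$ in Definition~\ref{def:decay} is yours to choose, and you should choose it to be exactly that exceptional set. You only need $I-\widehat{k_0}(\sigma)$ to be boundedly invertible on the tail, not at every $\sigma$.

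The missing idea is an absorption argument, which is what the paper does. Keep $k=k_0$, for which you already have $\|f-f*k\|_{L^2(G,\mathcal H)}<\varepsilon$ for all $f\in K$ with no use of $M$.

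\emph{Choosing $S$.} Since $k\in L^2(G)$, scalar Plancherel gives $\sum_\sigma d_\sigma\|\widehat k(\sigma)\|_{B_2(H_\sigma)}^2=\|k\|_{L^2}^2=1/m_G(O)<\infty$. The paper writes $1$ here, but only finiteness matters, and Riemann--Lebesgue in operator norm would serve equally well. So there is a finite $S$ with $\|\widehat k(\sigma)\|_{B_2(H_\sigma)}\le\tfrac12$ for $\sigma\notin S$.

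\emph{Bounding the smoothed tail.} Apply Cauchy--Schwarz in $\ell$ to $\widehat{f*k}(\sigma)_{ij}=\sum_\ell\widehat k(\sigma)_{i\ell}\widehat f(\sigma)_{\ell j}$. This gives $\|\widehat{f*k}\|_{\mathscr{S}_2(\widehat{G}\setminus S,\mathcal H)}\le\tfrac12\|\widehat f\|_{\mathscr{S}_2(\widehat{G}\setminus S,\mathcal H)}$, and therefore
\[
\|\widehat f\|_{\mathscr{S}_2(\widehat{G}\setminus S,\mathcal H)}\le\|f-f*k\|_{L^2(G,\mathcal H)}+\tfrac12\|\widehat f\|_{\mathscr{S}_2(\widehat{G}\setminus S,\mathcal H)}<\varepsilon+\tfrac12\|\widehat f\|_{\mathscr{S}_2(\widehat{G}\setminus S,\mathcal H)}.
\]

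\emph{Absorbing.} The left side is finite for each individual $f\in L^2(G,\mathcal H)$, so you can absorb the last term and get $\|\widehat f\|_{\mathscr{S}_2(\widehat{G}\setminus S,\mathcal H)}<2\varepsilon$ uniformly in $f\in K$. Finiteness of each single norm is what replaces your uniform bound $M$.
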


\begin{proof}
Let $\varepsilon>0$. The uniform $L^2(G,\mathcal{H})$-equicontinuity of $K$ implies the existence of an open neighborhood $U$ of $e$ such that $\|R_yf-f\|_{L^p(G,\mathcal{H})}
<\varepsilon$ for $f\in K,\ y\in U$.  
 Set $O=U\cap U^{-1}$. Then, $O$
is a symmetric open neighborhood  of $e$ 
such that $\|R_yf-f\|_{L^2(G,\mathcal{H})}<\varepsilon$ for $f\in K,\ y\in O$.  Set $k=\displaystyle\frac{1}{m_G(O)}1_O$ where $1_O$ designates the characteristic function of $O$. We have 
 $k\ge0$, $k(y^{-1})=k(y), \, \forall y\in O$, $\displaystyle\int_Gk\,dm_G=1$ and 
$k\in  L^2(G)\subset L^1(G)$ (the inclusion is due to the fact that $m_G$ is a finite measure).

Let $f\in K$. We have  $$(f*k)(x)-f(x)=\int_O\big(R_{y^{-1}}f(x)-f(x)\big)k(y)\,dm_G(y).$$ 
Therefore, 
\begin{align*}
\|f*k-f\|_{L^2(G,\mathcal{H})}&=\left(\int_G\left\|\int_O\big(R_{y^{-1}}f(x)-f(x)\big)k(y)\,dm_G(y)\right\|_{\mathcal{H}}^2dm_G(x)\right)^{\frac{1}{2}}\\
&\le \left[\int_G\left(\int_O k(y)\left\|\big(R_{y^{-1}}f(x)-f(x)\big)\right\|_{\mathcal{H}}dm_G(y)\right)^2dm_G(x)\right]^{\frac{1}{2}}\\
&\leq \int_O k(y) \left[\int_G \left\|\big(R_{y^{-1}}f(x)-f(x)\big)\right\|_{\mathcal{H}}^2 dm_G(x)\right]^{\frac{1}{2}} dm_G(y)\\
&\text{(by the the Minkowski's integral inequality)}\\
&=\int_Ok(y)\,\|R_{y^{-1}}f-f\|_{L^2(G,\mathcal{H})}\,dm_G(y)\\
&<\varepsilon \int_Gk(y)\,dm_G(y)=\varepsilon.
\end{align*}

Then, by Theorem~\ref{theo:plancherel}, 
 applied to $f*k-f$, we have
$$
\|\widehat{f*k}-\widehat f\|_{\mathscr{S}_2(\widehat{G},\mathcal{H})}=\|f*k-f\|_{L^2(G,\mathcal{H})}<\varepsilon,\quad f\in K.
$$
 Fix $\sigma$ in $\widehat{G}$. For each $i,j$, using the triangle inequality in $\mathcal{H}$ and the  Cauchy-Schwarz inequality on the sum over $\ell$, we successively have
 \begin{align*}
\big\|\widehat{f*k}(\sigma)_{ij}\big\|_\mathcal{H}&=\Big\|\sum_{\ell=1}^{d_\sigma}\widehat k(\sigma)_{i\ell}
\widehat{f}(\sigma)_{\ell j}\Big\|_\mathcal{H}\\
&\leq \sum_{\ell=1}^{d_\sigma}|\widehat k(\sigma)_{i\ell}|
\left\|\widehat{f}(\sigma)_{\ell j}\right\|_\mathcal{H}\\
&\le\Big(\sum_{\ell=1}^{d_\sigma}|\widehat k(\sigma)_{i\ell}|^2\Big)^{1/2}
\Big(\sum_{\ell=1}^{d_\sigma}\|\widehat{f}(\sigma)_{\ell j}\|_\mathcal{H}^2\Big)^{1/2}.
\end{align*}
Squaring and  summing over $i,j$  yield:
\begin{equation}\label{Path}
\sum_{i=1}^{d_\sigma}\sum_{j=1}^{d_\sigma}\big\|\widehat{f*k}(\sigma)_{ij}\big\|_\mathcal{H}^2\le\Big(\sum_{i=1}^{d_\sigma}\sum_{j=1}^{d_\sigma}
\|\widehat{f}(\sigma)_{ij}\|_\mathcal{H}^2\Big)\cdot\|\widehat k(\sigma)\|_{B_2(H_\sigma)}^2,
\end{equation}
 
\noindent Furthermore, since $k\in L^2(G)$, the (scalar) Plancherel theorem (Theorem~\ref{scalar:plancherel}) gives
$$\sum_{\sigma\in \widehat{G}}
d_\sigma\|\widehat k(\sigma)\|_{B_2(H_\sigma)}^2=\|k\|_{L^2(G)}^2=1<\infty.$$

Since the series converges, its general term must tend to zero. So,  there is a finite set $S\subset\widehat{G}$ such that $d_\sigma\|\widehat k(\sigma)\|_{B_2(H_\sigma)}^2\le\frac{1}{4}$ for $\sigma\notin S$. This yields $\|\widehat k(\sigma)\|_{B_2(H_\sigma)}^2\le \frac{1}{4}$ since $d_\sigma\ge 1$. 

Multiplying (\ref{Path}) by
$d_\sigma$ and summing over $\sigma\notin S$ yield:
$$
\|\widehat{f*k}\|_{\mathscr{S}_2(\widehat{G}\setminus S,\mathcal{H})}\le\tfrac12\|\widehat f\|_{\mathscr{S}_2(\widehat{G}\setminus S,\mathcal{H})}.$$
Now, 
\begin{align*}
\|\widehat f\|_{\mathscr{S}_2(\widehat{G}\setminus S,\mathcal{H})}&=\|\widehat{f}-\widehat{f*k}+\widehat{f*k}\|_{\mathscr{S}_2(\widehat{G}\setminus S,\mathcal{H})}\\
&\le\|\widehat f-\widehat{f*k}\|_{\mathscr{S}_2(\widehat{G}\setminus S,\mathcal{H})}
+\|\widehat{f*k}\|_{\mathscr{S}_2(\widehat{G}\setminus S,\mathcal{H})}\\
&<\varepsilon +\frac{1}{2}\|\widehat f\|_{\mathscr{S}_2(\widehat{G}
\setminus S,\mathcal{H})}.
\end{align*}
Thus,  $\|\widehat f\|_{\mathscr{S}_2(\widehat{G}\setminus S,\mathcal{H})}<2\varepsilon$ for every $f\in K$, with $S$
independent of $f$. This is the uniform $\mathscr{S}_2(G,\mathcal{H})$-decay of $\widehat K$.
\end{proof}

\begin{lemma}\label{lem:equicont}
Let $G$ be a compact group and $\mathcal{H}$ a complex Hilbert space.  Let $1\le p\le 2$ and  $p'$ be such that $\frac{1}{p}+\frac{1}{p'}=1$.   Let $K\subset L^{p'}(G,\mathcal{H})$ be bounded,
and suppose that $\widehat K$ has uniform $\mathscr{S}_p(\widehat{G},\mathcal{H})$-decay. Then $K$ is uniformly
$L^{p'}(G,\mathcal{H})$-equicontinuous.
\end{lemma}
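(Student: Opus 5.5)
The plan is to split each $f\in K$ into a low-frequency part, supported on a finite set $S\subset\widehat{G}$, and a high-frequency tail. The tail will be controlled uniformly through the inverse Hausdorff--Young inequality (Theorem~\ref{theo:invHY}). The low-frequency part will be controlled by the uniform continuity of finitely many matrix coefficients.

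Let $M=\sup_{f\in K}\|f\|_{L^{p'}(G,\mathcal{H})}<\infty$, fix $\varepsilon>0$, and choose a finite $S$ with $\|\widehat f\|_{\mathscr{S}_p(\widehat{G}\setminus S,\mathcal{H})}<\varepsilon$ for all $f\in K$. Since $p'\ge 2$ and $m_G$ is a probability measure, we have $K\subset L^2(G,\mathcal{H})$, so the inversion formula (\ref{eq:inversionformula}) holds in $L^2$. Define
\[
f_S(x)=\sum_{\sigma\in S}d_\sigma\sum_{i,j=1}^{d_\sigma}u^\sigma_{ij}(x)\,\widehat f(\sigma)_{ij},\qquad g=f-f_S .
\]
By Theorem~\ref{theo:plancherel}, $\widehat g=\widehat f\,1_{\widehat{G}\setminus S}$. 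I then want to apply Theorem~\ref{theo:invHY} to conclude
\[
\|g\|_{L^{p'}(G,\mathcal{H})}\le\|\widehat f\|_{\mathscr{S}_p(\widehat{G}\setminus S,\mathcal{H})}<\varepsilon .
\]
Right translation is an isometry of $L^{p'}(G,\mathcal{H})$, by the same invariance argument as in Proposition~\ref{proposition:translation-isometry}. Hence $\|R_yg-g\|_{L^{p'}}<2\varepsilon$ for every $y\in G$ and every $f\in K$.

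For the finite part, I would write
\[
R_yf_S-f_S=\sum_{\sigma\in S}d_\sigma\sum_{i,j}\big(u^\sigma_{ij}(xy)-u^\sigma_{ij}(x)\big)\widehat f(\sigma)_{ij}.
\]
The $L^{p'}$-norm of this is at most its sup norm. The estimate (\ref{OneInfinity}) and Hölder give $\|\widehat f(\sigma)_{ij}\|_{\mathcal{H}}\le\|f\|_{L^1}\le\|f\|_{L^{p'}}\le M$. This yields
\[
\|R_yf_S-f_S\|_{L^{p'}}\le M\sum_{\sigma\in S}d_\sigma\sum_{i,j}\sup_{x\in G}|u^\sigma_{ij}(xy)-u^\sigma_{ij}(x)|.
\]
Each $u^\sigma_{ij}$ is continuous on the compact group $G$, hence right uniformly continuous. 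Since there are finitely many of them, there is an open neighborhood $O$ of $e$ making the right-hand side less than $\varepsilon$ for all $y\in O$. Crucially, $O$ depends only on $S$ and $M$, not on $f$. Combining both parts gives $\|R_yf-f\|_{L^{p'}}<3\varepsilon$ for all $f\in K$ and $y\in O$.

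The main obstacle is the identification step in the tail estimate. Theorem~\ref{theo:invHY} defines $\varphi\mapsto\varphi^\vee$ on $\mathscr{S}_p$ as a bounded extension from the absolutely convergent case. I must check that for $\varphi=\widehat g\in\mathscr{S}_p\cap\mathscr{S}_2$ this extension coincides with $g$, the $L^2$-inverse from Theorem~\ref{theo:plancherel}. I would take finitely supported truncations $\varphi_n$ of $\varphi$. These converge to $\varphi$ in both $\mathscr{S}_p$ and $\mathscr{S}_2$, and $\varphi_n^\vee$ is the same finite sum under either definition. The $\varphi_n^\vee$ then converge in $L^{p'}$ (to the extension) and in $L^2$ (to $g$), and since $L^{p'}$-convergence implies $L^2$-convergence on a probability space, the two limits agree almost everywhere.
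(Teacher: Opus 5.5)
Your proof is correct and follows the paper's argument essentially step for step. You split $f$ into the band-limited part $P_Sf$ and the tail $f-P_Sf$, bound the tail in $L^{p'}$ by the inverse Hausdorff--Young inequality together with the isometry of $R_y$, and bound $R_y(P_Sf)-P_Sf$ uniformly via $\|\widehat f(\sigma)_{ij}\|_{\mathcal{H}}\le M$ and the uniform continuity of the finitely many $u^\sigma_{ij}$. Your truncation argument, identifying the bounded extension of $\varphi\mapsto\varphi^\vee$ with $f-P_Sf$, makes explicit a point the paper only asserts; you also correctly note that the decomposition needs only the linearity of $R_y$, not the paper's check that $P_S$ commutes with translations.
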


\begin{proof}
Let $\varepsilon>0$. By uniform $\mathscr{S}_p(\widehat{G},\mathcal{H})$-decay of
$\widehat K$, there is a finite set $S\subset\widehat{G}$ such that
$$
\|\widehat f\|_{\mathscr{S}_p(\widehat{G}\setminus S,\mathcal{H})}<\frac{\varepsilon}{3},\quad f\in K.
$$
Let $P_S$ be the band-limiting projection defined by
$$\widehat{P_Sf}(\sigma)=\left\lbrace\begin{array}{cc}
\widehat f(\sigma), & \text{ if } \sigma\in S\\ 
0, & \text{ otherwise. }
\end{array}\right.$$ 
 Equivalently $f-P_Sf$ has Fourier transform
$\widehat f\cdot 1_{\widehat{G}\setminus S}$ where $1_{\widehat{G}\setminus S}$ is the characteristic function of $\widehat{G}\setminus S$. By Theorem~\ref{theo:invHY} applied to $\varphi=\widehat f\cdot\mathbf1_{\widehat{G}\setminus S}
\in\mathscr{S}_p(\widehat{G},\mathcal{H})$, whose inverse Fourier transform is $f-P_Sf$, we obtain
\begin{equation}\label{RLL}
\|f-P_Sf\|_{L^{p'}(G,\mathcal{H})}\le\|\widehat f\|_{\mathscr{S}_p(\widehat{G}\setminus S,\mathcal{H})}<\frac{\varepsilon}{3},
\quad \forall f\in K. 
\end{equation}

The projection $P_S$ commutes with translation. Indeed, for $y\in G$, $f\in L^{p'}(G,\mathcal{H})$,
$\sigma\in\widehat{G}$, $\xi,\eta\in H_\sigma$,  we have

\begin{align*}
\widehat{R_yf}(\sigma)(\xi,\eta)&=\int_G\langle\sigma(x)^*\xi,\eta\rangle_{H_\sigma}\,
f(xy)\,dm_G(x)\\
&=\int_G\langle\sigma(zy^{-1})^*\xi,\eta\rangle_{H_\sigma}f(z)\,dm_G(z)\\ 
&\text{(by the change of variable }   x=zy^{-1} \\
&\text{ and the invariance of the Haar measure})\\
&=\int_G\langle\sigma(y)\sigma(z)^*\xi,\eta\rangle_{H_\sigma}\,f(z)\,dm_G(z)\\
&=\int_G\langle\sigma(z)^*\xi,\sigma(y)^*\eta\rangle_{H_\sigma}\,f(z)\,dm_G(z)\\
&=\hat f(\sigma)\big(\xi,\sigma(y)^*\eta\big),
\end{align*}
 In particular $\widehat{R_yf}$ is
supported on the same set of $\sigma$'s as $\widehat f$.
Hence,
$$
P_S(R_yf)=R_y(P_Sf),\qquad f\in L^{p'}(G,\mathcal{H}),\ y\in G. 
$$
Now, let us show that $P_Sf$ is continuous, and its finitely many coefficients are uniformly
bounded on $K$. By the inversion formula, restricted to the finite set $S$, we have
$$
P_Sf(x)=\sum_{\sigma\in S}d_\sigma\sum_{i=1}^{d_\sigma}\sum_{j=1}^{d_\sigma}\widehat f(\sigma)_{ij}\,
u^\sigma_{ij}(x),
$$
which is a finite sum of continuous $\mathcal{H}$-valued functions, hence continuous on $G$; by
the fact that a continuous vector-valued function on a compact space is bounded, each term of $P_Sf$, and hence $P_Sf$, is
bounded on $G$. Moreover, by  the H\"older's inequality applied to
$\widehat f(\sigma)_{ij}=\displaystyle\int_G\overline{u^\sigma_{ij}(x)}\,f(x)\,dm_G(x)$ against
$u^\sigma_{ij}\in L^p(G)$ (recall $|u^\sigma_{ij}|\le1$, so $u^\sigma_{ij}\in
L^p(G)$ for every $p$, with $\|u^\sigma_{ij}\|_{L^p(G)}\le1$), we have 
$$
\|\widehat f(\sigma)_{ij}\|_\mathcal{H}\le\|u^\sigma_{ij}\|_{L^p(G)}\,\|f\|_{L^{p'}(G,\mathcal{H})}\le
\|f\|_{L^{p'}(G,\mathcal{H})},\quad\sigma\in S,\ f\in K.
$$
Since $K$ is bounded, say for all $f\in K$, $\|f\|_{L^{p'}(G,\mathcal{H})}\le M$ for some positive constant $M$,  then the coefficients set 
 $\{\widehat f(\sigma)_{ij}:\sigma\in S,\,1\le i,j\le
d_\sigma\}$ is uniformly bounded on $K$ by $M$; that is 

\begin{equation}\label{CoeffBound}
\|\widehat f(\sigma)_{ij}\|_\mathcal{H}\le M,\quad f\in K,\, \sigma\in S,\,1\le i,j\le
d_\sigma.
\end{equation}

Since $G$ is compact and each $u^\sigma_{ij}$ is continuous, $u^\sigma_{ij}$ is
uniformly continuous on $G$ by the Heine-Cantor theorem (a continuous function on a compact topological group, viewed as a compact uniform space, is uniformly continuous). As $S$
is finite, there is a single open neighborhood $O$ of $e$ such that, for every
$y\in O$, $\sigma\in S$, $1\le i,j\le d_\sigma$,
\begin{equation}\label{SupBound}
\sup_{x\in G}\big|u^\sigma_{ij}(xy)-u^\sigma_{ij}(x)\big|<
\frac{\varepsilon}{3M\sum\limits_{\sigma\in S}d_\sigma^3}.
\end{equation}
For $f\in K$ and $y\in O$, we have
\begin{align*}
\big\|R_y(P_Sf)(x)-P_Sf(x)\big\|_\mathcal{H}&=\Big\|\sum_{\sigma\in S}d_\sigma\sum_{i=1}^{d_\sigma}\sum_{j=1}^{d_\sigma}
\widehat f(\sigma)_{ij}\big(u^\sigma_{ij}(xy)-u^\sigma_{ij}(x)\big)\Big\|_\mathcal{H}\\
&\le \sum_{\sigma\in S}d_\sigma\sum_{i=1}^{d_\sigma}\sum_{j=1}^{d_\sigma}\Big\|\widehat f(\sigma)_{ij}\Big\|_\mathcal{H}\big |u^\sigma_{ij}(xy)-u^\sigma_{ij}(x)\big|\\
&<\frac{\varepsilon}{3}, \text{ using (\ref{CoeffBound}) and (\ref{SupBound}).}
\end{align*}
 Integrating  the $p'$-th power of both sides over $G$, we obtain
\begin{equation}\label{RL}
\|R_y(P_Sf)-P_Sf\|_{L^{p'}(G,\mathcal{H})}<\frac{\varepsilon}{3},\quad f\in K,\ y\in O. 
\end{equation}

  We break $R_yf-f$ down into three parts:
$$
R_yf-f=R_y(f-P_Sf)+\big(R_y(P_Sf)-P_Sf\big)-(f-P_Sf).
$$
Thus,  using the fact that right translation is an isometry of $L^{p'}(G,\mathcal{H})$
 (due to the right-invariance of $m_G$) and  the inequalities  (\ref{RLL}) and   (\ref{RL}), we obtain
\begin{align*}
\|R_yf-f\|_{L^{p'}(G,\mathcal{H})}&\le\|R_y(f-P_Sf)\|_{L^{p'}(G,\mathcal{H})}
+\|R_y(P_Sf)-P_Sf\|_{L^{p'}(G,\mathcal{H})}+\|P_Sf-f\|_{L^{p'}(G,\mathcal{H})}\\
&<\frac{\varepsilon}{3}+\frac{\varepsilon}{3}+\frac{\varepsilon}{3}=\varepsilon.
\end{align*}
Hence, $K$ is uniformly
$L^{p'}(G,\mathcal{H})$-equicontinuous.
\end{proof}

\begin{corollary}\label{cor:equiv}
Let $G$ be a compact group and $\mathcal{H}$ a complex Hilbert space. Let $K\subset L^2(G,\mathcal{H})$ be bounded. Then,  $K$ is uniformly $L^2(G,\mathcal{H})$-equicontinuous if and only if
$\widehat K$ has uniform $\mathscr{S}_2(G,\mathcal{H})$-decay.
\end{corollary}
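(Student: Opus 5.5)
The corollary follows by combining the two preceding lemmas, so the plan is to prove the two implications separately and check that each lemma's hypotheses are met.

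\emph{Forward implication.} Suppose $K$ is uniformly $L^2(G,\mathcal{H})$-equicontinuous. Lemma~\ref{lem:decay} then says directly that $\widehat K$ has uniform $\mathscr{S}_2(\widehat{G},\mathcal{H})$-decay. That lemma does not need boundedness, so this direction holds for any $K$. The decay space in the statement is written $\mathscr{S}_2(G,\mathcal{H})$; I read this as $\mathscr{S}_2(\widehat{G},\mathcal{H})$, which is the only meaningful choice.

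\emph{Reverse implication.} Suppose $\widehat K$ has uniform $\mathscr{S}_2(\widehat{G},\mathcal{H})$-decay. I will apply Lemma~\ref{lem:equicont} with $p=2$, so that $p'=2$. Its hypotheses are that $K$ is bounded in $L^{p'}(G,\mathcal{H})=L^2(G,\mathcal{H})$, which the corollary assumes, and that $\widehat K$ has uniform $\mathscr{S}_p$-decay, which is exactly what we are given. The conclusion is that $K$ is uniformly $L^2(G,\mathcal{H})$-equicontinuous.

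Boundedness is genuinely needed here. In the proof of Lemma~\ref{lem:equicont} it enters through the uniform bound $M$ on the finitely many coefficients $\widehat f(\sigma)_{ij}$ with $\sigma\in S$, which controls the translation modulus of the band-limited parts $P_Sf$.

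The only point that needs care is the endpoint $p=2$ of Lemma~\ref{lem:equicont}. Its tail estimate uses Theorem~\ref{theo:invHY}, which at $p=2$ is simply the Plancherel identity of Theorem~\ref{theo:plancherel}. So the argument applies without change, and I do not expect any real obstacle.
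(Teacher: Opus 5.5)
Your proposal is correct and follows the paper's proof exactly: the forward direction is Lemma~\ref{lem:decay}, which does not use boundedness, and the reverse direction is Lemma~\ref{lem:equicont} specialized to $p=p'=2$. Your reading of $\mathscr{S}_2(G,\mathcal{H})$ as $\mathscr{S}_2(\widehat{G},\mathcal{H})$ is right, and so is your remark that boundedness is genuinely needed for the reverse implication; for example, $f_n(x)=n e^{ix}a$ on $\mathbb{T}$, with $a\in\mathcal{H}$ a unit vector, has uniform decay but is not equicontinuous.
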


\begin{proof}
 Combining Lemma~\ref{lem:decay}  and  Lemma~\ref{lem:equicont} (restricted  to the case $p=p'=2$)  yields the desired statement.
\end{proof}

\begin{lemma}\label{lemma:translation-continuous}
Let $G$ be a compact group, $\mathcal{H}$ a complex Hilbert space, and $f\in L^2(G,\mathcal{H})$.
Then, the map $G\to L^2(G,\mathcal{H})$, $y\mapsto R_yf$ is continuous. 
\end{lemma}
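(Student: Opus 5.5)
We prove continuity of $y\mapsto R_yf$ by density and an $\varepsilon/3$ argument, using that each $R_y$ is an isometry of $L^2(G,\mathcal{H})$.

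\emph{Isometry.} By right invariance of $m_G$,
\[
\|R_yg\|_{L^2(G,\mathcal{H})}^2=\int_G\|g(xy)\|_{\mathcal{H}}^2\,dm_G(x)=\|g\|_{L^2(G,\mathcal{H})}^2 ,
\]
so each $R_y$ is a linear isometry. Since $R_{y}R_{z}=R_{zy}$, we have $\|R_yf-R_{y_0}f\|=\|R_{y_0^{-1}y}f-f\|$ after applying the isometry $R_{y_0^{-1}}$ appropriately. Hence it suffices to prove continuity at the identity $e$.

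\emph{Dense class.} We take trigonometric polynomials $g=\sum_{\sigma\in S}d_\sigma\sum_{i,j}u^\sigma_{ij}\,v^\sigma_{ij}$ with $S$ finite and $v^\sigma_{ij}\in\mathcal{H}$. These are dense in $L^2(G,\mathcal{H})$ by Theorem~\ref{theo:plancherel}: truncating $\widehat f$ to finite sets $S$ gives approximants in $\mathscr{S}_2$, and hence in $L^2$ via the inversion formula (\ref{eq:inversionformula}). Alternatively, one can use the orthonormal basis $\sqrt{d_\sigma}u^\sigma_{ij}\otimes h_k$ together with finite-dimensional truncation in $\mathcal{H}$, grouped by $(\sigma,i,j)$.

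\emph{Continuity for the dense class.} For such $g$, each $u^\sigma_{ij}$ is uniformly continuous on $G$, exactly as in the proof of Lemma~\ref{lem:equicont}. The estimate there gives $\sup_x\|g(xy)-g(x)\|_{\mathcal{H}}\to0$ as $y\to e$, and therefore $\|R_yg-g\|_{L^2(G,\mathcal{H})}\to0$.

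\emph{Conclusion.} Given $\varepsilon>0$, we choose a trigonometric polynomial $g$ with $\|f-g\|<\varepsilon/3$ and a neighborhood $O$ of $e$ with $\|R_yg-g\|<\varepsilon/3$ for $y\in O$. Then
\[
\|R_yf-f\|\le\|R_y(f-g)\|+\|R_yg-g\|+\|g-f\|<\varepsilon ,
\]
using the isometry property for the first term.

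The only step needing care is the density claim. It follows directly from surjectivity and isometry in Theorem~\ref{theo:plancherel}, since the partial sums over finite $S$ converge in $\mathscr{S}_2$. Alternatively, one could note that simple functions $\sum 1_{E_k}v_k$ are dense in the Bochner space and reduce to continuity of translation on scalar $L^2(G)$, which is implicit in Kumar's setting.
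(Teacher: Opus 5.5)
Your proof is correct and follows the paper's scheme: isometry of $R_y$, an $\varepsilon/3$ approximation from a dense class, and reduction to $y_0=e$. The one difference is the dense class: you use trigonometric polynomials, with density supplied by Theorem~\ref{theo:plancherel}, where the paper uses $C(G,\mathcal{H})$; this reduces the uniform-continuity step to finitely many matrix coefficients, at the cost of invoking Peter--Weyl.

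There is one small slip. The composition law is $R_yR_z=R_{yz}$, since $R_y(R_zf)(x)=f(xyz)$, and not $R_{zy}$ as you wrote. It is the correct law that gives $R_{y_0^{-1}}(R_yf-R_{y_0}f)=R_{y_0^{-1}y}f-f$, so your displayed identity still holds.
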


\begin{proof}
By the construction of the Bochner integral,  continuous $\mathcal{H}$-valued functions are dense
in $L^2(G,\mathcal{H})$. So, given $\delta>0$, choose $g\in C(G,\mathcal{H})$ such that
\begin{equation}\label{DI}
\|f-g\|_{L^2(G,\mathcal{H})}<\delta.
\end{equation}

Since  $R_y$ is an isometry of $L^2(G,\mathcal{H})$ for every $y\in G$,  we have
\begin{equation}\label{DDI}
\|R_yf-R_yg\|_{L^2(G,\mathcal{H})}=\|R_y(f-g)\|_{L^2(G,\mathcal{H})}=\|f-g\|_{L^2(G,\mathcal{H})}<\delta.
\end{equation}

Since $G$ is compact and $g$ is continuous, $g$ is uniformly continuous. Thus, for every
$\eta>0$ there is an open neighborhood $V$ of $e$ such that
$$
\|g(xy)-g(x)\|_\mathcal{H}<\eta,\qquad x\in G,\ y\in V.
$$
 Let  $y\in V$. We have
$$
\|R_yg-g\|_{L^2(G,\mathcal{H})}^2=\int_G\|g(xy)-g(x)\|_\mathcal{H}^2\,dm_G(x)\le\eta^2\,m_G(G)=\eta^2.
$$
Thus, 
\begin{equation}\label{DDDI}
\|R_yg-g\|_{L^2(G,\mathcal{H})}\le\eta,\quad
y\in V.
\end{equation}
Now,  given $\varepsilon>0$, first
apply  (\ref{DI}) with $\delta=\frac{\varepsilon}{3}$ to get $g\in C(G,\mathcal{H})$ such that
$\|f-g\|_{L^2(G,\mathcal{H})}<\frac{\varepsilon}{3}$. Then apply (\ref{DDDI}) with $\eta=\frac{\varepsilon}{3}$ to
get an open neighborhood $V$ of $e$ such that $\|R_yg-g\|_{L^2(G,\mathcal{H})}\le\frac{\varepsilon}{3}$. We now get
\begin{align*}
\|R_yf-f\|_{L^2(G,\mathcal{H})}&\le\|R_yf-R_yg\|_{L^2(G,\mathcal{H})}+\|R_yg-g\|_{L^2(G,\mathcal{H})}+
\|g-f\|_{L^2(G,\mathcal{H})}\\
&<\frac{\varepsilon}{3}+\frac{\varepsilon}{3}+\frac{\varepsilon}{3}=\varepsilon.
\end{align*}
by using (\ref{DDI}) for the first and third terms.
Thus, the map $y\mapsto R_yf$ is continuous at $e$. To check the continuity at any point $y_0\in G$, write
$$R_yf-R_{y_0}f=R_{y_0}\big(R_{y_0^{-1}y}f-f\big)$$
and  use the fact that $\|\cdot\|_{L^2(G,\mathcal{H})}$
is unaffected by the isometry $R_{y_0}$ togheter with the continuity at $e$. The conclusion follows.
\end{proof}

\begin{lemma}\label{lem:PS-finite-dim}
Let $G$ be a compact group, $\mathcal{H}$ a complex Hilbert space, and $S$  a finite subset of $\widehat{G}$.
Let $P_S:L^2(G,\mathcal{H})\to L^2(G,\mathcal{H})$ be the band-limiting projection defined by
$$\widehat{P_Sf}(\sigma)=\left\lbrace\begin{array}{cc}
\widehat f(\sigma), & \text{ if } \sigma\in S\\ 
0, & \text{ otherwise. }
\end{array}\right.$$ 
 Consider the finite number $
N=\sum\limits_{\sigma\in S}d_\sigma^2$.

Then, the range $P_S\big(L^2(G,\mathcal{H})\big)$ is
linearly isometric (up to a fixed rescaling of the norm) to $\mathcal{H}^N$,  the
direct sum of $N$ copies of $\mathcal{H}$.
\end{lemma}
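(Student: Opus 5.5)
The plan is to identify the range of $P_S$ explicitly through the Plancherel isometry (Theorem~\ref{theo:plancherel}), and then read off the coordinates. First I would check that $P_S$ is well defined on $L^2(G,\mathcal{H})$. Given $f$, the sequence $\widehat f\cdot 1_S$ lies in $\mathscr{S}_2(\widehat{G},\mathcal{H})$, with norm at most $\|\widehat f\|_{\mathscr{S}_2}$. By the surjectivity part of Theorem~\ref{theo:plancherel}, there is a unique $P_Sf\in L^2(G,\mathcal{H})$ with this Fourier transform. By the inversion formula (\ref{eq:inversionformula}),
\[
P_Sf(x)=\sum_{\sigma\in S}d_\sigma\sum_{i=1}^{d_\sigma}\sum_{j=1}^{d_\sigma}u^\sigma_{ij}(x)\,\widehat f(\sigma)_{ij}.
\]
Hence the range of $P_S$ is the set of $g\in L^2(G,\mathcal{H})$ with $\widehat g(\sigma)=0$ for $\sigma\notin S$. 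Conversely, every choice of vectors $(v^\sigma_{ij})_{\sigma\in S,\,1\le i,j\le d_\sigma}\in\mathcal{H}^N$ is attained, namely by $g=\sum_{\sigma\in S}d_\sigma\sum_{i,j}u^\sigma_{ij}\,v^\sigma_{ij}$.

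Next I would define the map
\[
\Phi:P_S\big(L^2(G,\mathcal{H})\big)\to\mathcal{H}^N,\qquad \Phi(g)=\big(\widehat g(\sigma)_{ij}\big)_{\sigma\in S,\,1\le i,j\le d_\sigma},
\]
with the $N=\sum_{\sigma\in S}d_\sigma^2$ index triples enumerated once and for all. This map is linear because the Fourier transform is linear. It is injective because the Fourier transform is injective, which follows from the isometry in Theorem~\ref{theo:plancherel}. It is surjective by the construction in the previous paragraph.

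For the norm, Theorem~\ref{theo:plancherel} gives
\[
\|g\|_{L^2(G,\mathcal{H})}^2=\sum_{\sigma\in S}d_\sigma\sum_{i,j}\|\widehat g(\sigma)_{ij}\|_{\mathcal{H}}^2 .
\]
This is exactly the norm of $\Phi(g)$ in $\mathcal{H}^N$ once each coordinate block indexed by $\sigma$ carries the weight $d_\sigma$. The "fixed rescaling" in the statement is therefore the diagonal rescaling $v^\sigma_{ij}\mapsto\sqrt{d_\sigma}\,v^\sigma_{ij}$. Composing $\Phi$ with this rescaling gives a genuine linear isometry onto $\mathcal{H}^N$ carrying the standard $\ell^2$-direct-sum norm. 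Alternatively, $\Phi$ itself is bi-Lipschitz, with constants $1$ and $\max_{\sigma\in S}\sqrt{d_\sigma}$, since $1\le d_\sigma$.

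The main point to be careful about is whether the rescaling in the statement is meant to be a single scalar. If $S$ contains representations of different dimensions, a single scalar cannot work, so I would state explicitly that the rescaling is the fixed diagonal weight $\sqrt{d_\sigma}$ on each block. Everything else reduces directly to Theorem~\ref{theo:plancherel} and the finiteness of $S$. In particular, when $\dim\mathcal{H}<\infty$ the range is finite-dimensional, which is what the later compactness argument will use.
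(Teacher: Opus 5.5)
Your proof is correct and follows the paper's argument. The paper works with the synthesis map $\Psi(a)=\sum_{\sigma\in S}d_\sigma\sum_{i,j}u^\sigma_{ij}\,a_{\sigma,i,j}$ from $\mathcal{H}^N$ onto the range, which is just the inverse of your coefficient map $\Phi$, and it computes its norm via Theorem~\ref{theo:plancherel} using the weighted norm $\|a\|_w^2=\sum_{\sigma\in S}d_\sigma\sum_{i,j}\|a_{\sigma,i,j}\|_{\mathcal{H}}^2$; this weighted norm is exactly your diagonal $\sqrt{d_\sigma}$ rescaling, and your claim that the explicit $g$ satisfies $\widehat g(\sigma)_{ij}=v^\sigma_{ij}$ is a one-line Schur-orthogonality check (or follows from the surjectivity in Theorem~\ref{theo:plancherel} together with the inversion formula).
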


\begin{proof}
 Set $\mathcal{I}=\{(\sigma,i,j): \sigma\in S, 1\le i,j\le d_\sigma\}$. The cardinality of $\mathcal{I}$ is $N$. Enumerate the  triples
$(\sigma,i,j)$ in  $\mathcal{I}$  as $1,\dots,N$. Define
$$
\Psi:\mathcal{H}^N\longrightarrow L^2(G,\mathcal{H}),\quad
\Psi\big((a_{\sigma,i,j})_{(\sigma,i,j)\in \mathcal{I}} \big)=\sum_{\sigma\in S}d_\sigma
\sum_{i=1}^{d_\sigma}\sum_{j=1}^{d_\sigma}u^\sigma_{ij}(\,\cdot\,)a_{\sigma,i,j}.
$$
The map $\Psi$ is manifestly linear, and well-defined into $L^2(G,\mathcal{H})$ since it is a finite sum
of continuous, hence $L^2(G,\mathcal{H})$-valued functions.

\noindent The image of $\Psi$ is exactly $P_S\big(L^2(G,\mathcal{H})\big)$. Indeed, if $g=\Psi(a)$
for some $a\in\mathcal{H}^N$, then
 $\widehat{g}=\widehat{g}\cdot 1_S$; that is  $g=P_Sg$. So,  the image of $\Psi$ lies in
$P_S\big(L^2(G,\mathcal{H})\big)$. Conversely, for any $f\in L^2(G,\mathcal{H})$, setting
$a_{\sigma,i,j}=\hat f(\sigma)_{ij}$ for $\sigma\in S$ gives, by the inversion formula
restricted to $S$,
$$
\Psi(a)=\sum_{\sigma\in S}d_\sigma\sum_{i=1}^{d_\sigma}\sum_{j=1}^{d_\sigma}u^\sigma_{ij}\widehat f(\sigma)_{ij}=P_Sf,
$$
so every element of $P_S\big(L^2(G,\mathcal{H})\big)$ is of the form $\Psi(a)$. Hence,
$\Psi\big(\mathcal{H}^N\big)=P_S\big(L^2(G,\mathcal{H})\big)$.

Let us prove that  $\Psi$ is an  isometry.  By  Theorem~\ref{theo:plancherel} applied to $g=\Psi(a)$, we have 
$$
\|\Psi(a)\|_{L^2(G,\mathcal{H})}^2=\|\widehat{g}\|_{\mathscr{S}_2(G,\mathcal{H})}^2=\sum_{\sigma\in S}
d_\sigma\sum_{i,j}\|a_{\sigma,ij}\|_\mathcal{H}^2.
$$
Define the norm of  $a=(a_{\sigma,i,j})_{(\sigma,i,j)\in \mathcal{I}}\in \mathcal{H}^N$  by:
$$\|a\|_{w}^2=\sum_{\sigma\in S}d_\sigma\sum_{i=1}^{d_\sigma}\sum_{j=1}^{d_\sigma}\|a_{\sigma,ij}\|_\mathcal{H}^2.$$ 

So
\[
\|\Psi(a)\|_{L^2(G,\mathcal{H})}=\|a\|_w,\qquad a\in\mathcal{H}^N,
\]
i.e.\ $\Psi$ is an isometry from $(\mathcal{H}^N,\|\cdot\|_w)$ onto $P_S\big(L^2(G,\mathcal{H})\big)$
with its $L^2(G,\mathcal{H})$-norm. In particular $\Psi$ is injective (isometries are
injective). In conclusion, 
 $P_S\big(L^2(G,\mathcal{H})\big)$ is linearly isometric, up to this fixed rescaling, to
$\mathcal{H}^N$.
\end{proof}

The trivial representation of $G$ is the representation $\sigma_0$ given by  $\sigma_0(x)w=w$ for all $x\in G, \,w\in \mathbb C$.

\begin{proposition}\label{Claim}
Let $\mathcal{H}$ be a complex Hilbert space,  $a\in \mathcal{H}$, and $f\in L^2(G,\mathcal{H})$ the constant function $f(x)\equiv a$. Then
$\widehat f(\sigma)=0$ for every $\sigma\in\widehat{G}\setminus\{\sigma_0\}$.
\end{proposition}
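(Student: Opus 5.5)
The plan is to compute $\widehat f(\sigma)(\xi,\eta)$ directly from the definition and reduce it to a scalar integral. For the constant function $f\equiv a$ we have
\[
\widehat f(\sigma)(\xi,\eta)=\int_G\langle\sigma(x)^*\xi,\eta\rangle_{H_\sigma}\,a\,dm_G(x)=\Big(\int_G\langle\sigma(x)^*\xi,\eta\rangle_{H_\sigma}\,dm_G(x)\Big)\,a .
\]
Pulling the constant vector $a$ out of the Bochner integral is legitimate because $x\mapsto\langle\sigma(x)^*\xi,\eta\rangle$ is a continuous bounded scalar function. Alternatively, we can pair with an arbitrary $h\in\mathcal H$ and use that bounded linear functionals commute with the Bochner integral. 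It therefore suffices to show that, for $\sigma\neq\sigma_0$, the operator $A=\int_G\sigma(x)^*\,dm_G(x)\in B(H_\sigma)$ is zero, since the scalar integral above equals $\langle A\xi,\eta\rangle$.

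There are two natural ways to prove $A=0$. The quickest uses Schur orthogonality. The constant function $1$ is $u^{\sigma_0}_{11}$, so for $\sigma\neq\sigma_0$
\[
\int_G\overline{u^\sigma_{ij}(x)}\,dm_G(x)=\overline{\int_G u^\sigma_{ij}(x)\overline{u^{\sigma_0}_{11}(x)}\,dm_G(x)}=0 .
\]
In the paper's coordinates this says $\widehat f(\sigma)_{ij}=0$ for all $i,j$. By sesquilinearity, $\widehat f(\sigma)(\xi,\eta)=\sum_{i,j}\overline{\cdots}\,\widehat f(\sigma)(\xi_j^\sigma,\xi_i^\sigma)$ expands over the basis, so $\widehat f(\sigma)$ vanishes identically.

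As a backup, or as a remark, one can give a self-contained argument via invariance of Haar measure. By left invariance, $\sigma(y)^*A=\int_G\sigma(yx)^*\,dm_G(x)=A$ for all $y$. Hence every vector in the range of $A$ is fixed by $\sigma(y)^*$, and so by $\sigma(y)$. The closed subspace of $\sigma$-fixed vectors is invariant, so by irreducibility it is either $\{0\}$ or all of $H_\sigma$. In the latter case $\sigma$ is a multiple of the identity, and irreducibility forces $d_\sigma=1$ with $\sigma$ trivial, i.e.\ $\sigma=\sigma_0$, which is excluded. So the range of $A$ is $\{0\}$ and $A=0$.

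The only real care point is the identification $u^{\sigma_0}_{11}\equiv1$, which needs $\sigma_0$ realized on $\mathbb C$ with unit basis vector, together with the justification for pulling $a$ out of the integral. Neither poses a genuine obstacle. I expect the proof to be a short direct computation.
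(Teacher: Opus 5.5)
Your proof is correct, and your main argument takes a different route from the paper. The paper reduces to showing $\widehat{\mathbf 1}(\sigma)=\int_G\sigma(x)^*\,dm_G(x)=0$, as your backup does. It then uses invariance of $m_G$ to get $\sigma(y)\widehat{\mathbf 1}(\sigma)=\widehat{\mathbf 1}(\sigma)=\widehat{\mathbf 1}(\sigma)\sigma(y)$, applies Schur's lemma to write $\widehat{\mathbf 1}(\sigma)=c\,\mathrm{Id}_{H_\sigma}$, and concludes $c=0$ from $c\,\mathrm{Id}_{H_\sigma}=c\,\sigma(y)$.

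Your primary argument instead applies the Schur orthogonality relations, already recalled in Section 2, against the matrix coefficient $u^{\sigma_0}_{11}\equiv 1$. This kills every coordinate $\widehat f(\sigma)_{ij}$ at once. It is shorter and matches the coordinate description used throughout the paper, but it leans on the orthogonality relations, which are themselves proved via Schur's lemma. No special care is needed for $u^{\sigma_0}_{11}\equiv 1$: $u^{\sigma_0}_{11}(x)=\|\xi^{\sigma_0}_1\|^2=1$ for any unit basis vector.

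Your backup is a leaner version of the paper's argument. It needs only the one-sided identity $\sigma(y)^*A=A$ and the observation that the $\sigma$-fixed vectors form a closed invariant subspace, so Schur's lemma is never invoked. It also makes explicit a step the paper leaves implicit: an irreducible $\sigma$ acting trivially must be one-dimensional and hence equal to $\sigma_0$.

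There is one small slip in the backup. Since $\sigma(y)^*\sigma(x)^*=(\sigma(x)\sigma(y))^*=\sigma(xy)^*$, the integrand is $\sigma(xy)^*$, not $\sigma(yx)^*$. The identity $\sigma(y)^*A=A$ therefore comes from right invariance rather than left invariance. Because $m_G$ is bi-invariant on a compact group, the conclusion is unaffected.
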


\begin{proof}
Fix  $\sigma\in\widehat{G}$ with $\sigma\ne\sigma_0$. By definition, for $\xi,\eta\in H_\sigma$,  we have
$$
\widehat f(\sigma)(\xi,\eta)=\int_G\langle\sigma(x)^*\xi,\eta\rangle_{H_\sigma}\,f(x)\,
dm_G(x)=\Big(\int_G\langle\sigma(x)^*\xi,\eta\rangle_{H_\sigma}\,dm_G(x)\Big)\,a.
$$
 It remains to show that scalar integral  $\displaystyle\int_G\langle\sigma(x)^*\xi,
\eta\rangle_{H_\sigma}\,dm_G(x)$ vanishes.

By definition of the scalar Fourier transform
on $G$ applied to the constant function $\mathbf 1(x)\equiv1\in L^1(G)$, we have 
$$
\widehat{\mathbf 1}(\sigma)=\int_G\mathbf 1(x)\,\sigma(x)^*\,dm_G(x)=\int_G\sigma(x)^*\,
dm_G(x),
$$
so $\displaystyle\int_G\langle\sigma(x)^*\xi,\eta\rangle_{H_\sigma}\,dm_G(x)=\langle\widehat{\mathbf1}(\sigma)
\xi,\eta\rangle_{H_\sigma}$, and it suffices to show that $\widehat{\mathbf 1}(\sigma)=0$.

 For any fixed
$y\in G$, we have
\begin{align*}
\sigma(y)\,\hat{\mathbf1}(\sigma)&=\sigma(y)\int_G\sigma(x)^*\,dm_G(x)\\
&=\int_G
\sigma(y)\sigma(x)^*\,dm_G(x)\\
&=\int_G\sigma(y x^{-1})\,dm_G(x)\\
&(\text{by the invariance of } m_G)\\
&=\int_G\sigma(x^{-1})\,dm_G(x)\\
&=\int_G\sigma(x)^*\,dm_G(x)=\widehat{\mathbf 1}(\sigma).
\end{align*}
A similar  computation  gives
$\widehat{\mathbf1}(\sigma)\sigma(y)=\hat{\mathbf1}(\sigma)$. Therefore, 
$\widehat{\mathbf1}(\sigma)$ commutes with $\sigma(y)$ for every $y\in G$. 

 Since $\sigma$ is irreducible, Schur's lemma applied
to the  operator $\widehat{\mathbf1}(\sigma)$  gives
$\widehat{\mathbf1}(\sigma)=c\cdot\mathrm{Id}_{H_\sigma}$ for some scalar $c\in\mathbb C$.
But $\hat{\mathbf 1}(\sigma)=\sigma(y)\hat{\mathbf1}(\sigma)$ for every $y$, thus 
 $c\cdot\mathrm{Id_{H_\sigma}}=c\cdot\sigma(y)$ for every $y\in G$.  If $c\ne0$ this would
give $\sigma(y)=\mathrm{Id}_{H_\sigma}$ for every $y\in G$, i.e.\ $\sigma=\sigma_0$
which contradicts $\sigma\ne\sigma_0$. Hence $c=0$, i.e.
$$
\widehat{\mathbf1}(\sigma)=0,\quad\sigma\in\widehat{G}\setminus\{\sigma_0\}.
$$

\end{proof}

Hereafter is a vector version of the Pego theorem on compact groups (Version 1). 
 
\begin{theorem}\label{thm:main}
Let $G$ be a compact group and  $\mathcal{H}$ a finite-dimensional complex Hilbert space. Let  $K$ be a bounded subset of
$L^2(G,\mathcal{H})$. The following assertions are equivalent: 
\begin{enumerate}
\item[(i)] $K$ is precompact;
\item[(ii)] $K$ is uniformly $L^2(G,\mathcal{H})$-equicontinuous;
\item[(iii)] $\widehat K$ has uniform
$\mathscr{S}_2(\widehat{G},\mathcal{H})$-decay.
\end{enumerate}
\end{theorem}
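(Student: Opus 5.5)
Since Corollary~\ref{cor:equiv} already gives (ii)$\Leftrightarrow$(iii) for bounded $K$ and any Hilbert space $\mathcal{H}$, it suffices to prove (i)$\Rightarrow$(ii) and (iii)$\Rightarrow$(i). Finite dimensionality of $\mathcal{H}$ is needed only in the last implication.

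For (i)$\Rightarrow$(ii) I would use a standard $\varepsilon/3$ argument. Given $\varepsilon>0$, precompactness gives a finite $\varepsilon/3$-net $f_1,\dots,f_n\in L^2(G,\mathcal{H})$ for $K$. By Lemma~\ref{lemma:translation-continuous}, each $y\mapsto R_yf_m$ is continuous at $e$, so there is a neighborhood $O_m$ of $e$ with $\|R_yf_m-f_m\|<\varepsilon/3$ for $y\in O_m$. Set $O=\bigcap_m O_m$. For $f\in K$ choose $m$ with $\|f-f_m\|<\varepsilon/3$. Since $R_y$ is an isometry of $L^2(G,\mathcal{H})$ by right invariance of $m_G$,
\[
\|R_yf-f\|\le\|R_y(f-f_m)\|+\|R_yf_m-f_m\|+\|f_m-f\|<\varepsilon,\qquad y\in O .
\]

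For (iii)$\Rightarrow$(i) I would check total boundedness via the Hausdorff criterion. Given $\varepsilon>0$, uniform decay yields a finite $S\subset\widehat{G}$ with $\|\widehat f\|_{\mathscr{S}_2(\widehat{G}\setminus S,\mathcal{H})}<\varepsilon/2$ for all $f\in K$. By Theorem~\ref{theo:plancherel}, this means $\|f-P_Sf\|_{L^2(G,\mathcal{H})}<\varepsilon/2$. The operator $P_S$ is an orthogonal projection, since under the Plancherel isometry it is multiplication by $1_S$, so $\|P_Sf\|\le\|f\|\le M$ and $P_S(K)$ is bounded. By Lemma~\ref{lem:PS-finite-dim}, $P_S(L^2(G,\mathcal{H}))$ is isometric to $(\mathcal{H}^N,\|\cdot\|_w)$. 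Because $\dim\mathcal{H}<\infty$, this is a finite-dimensional normed space, so by Heine--Borel the bounded set $P_S(K)$ is totally bounded. Choose an $\varepsilon/2$-net $g_1,\dots,g_r$ for $P_S(K)$. Then for $f\in K$ there is $g_l$ with
\[
\|f-g_l\|\le\|f-P_Sf\|+\|P_Sf-g_l\|<\varepsilon,
\]
so $K$ is totally bounded, hence precompact.

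The only delicate point is this last step: finite dimensionality is exactly what makes bounded subsets of $P_S(L^2(G,\mathcal{H}))\cong\mathcal{H}^N$ totally bounded. I would also verify that $P_S$ is well defined and contractive on $L^2(G,\mathcal{H})$ (via the Plancherel isomorphism), since this is what keeps $P_S(K)$ bounded. Everything else reduces to Corollary~\ref{cor:equiv} and routine triangle inequalities.
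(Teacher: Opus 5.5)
Your proposal is correct and follows the paper's proof essentially step for step. It gets (ii)$\Leftrightarrow$(iii) from Corollary~\ref{cor:equiv}, proves (i)$\Rightarrow$(ii) by the $\varepsilon/3$ argument using Lemma~\ref{lemma:translation-continuous} and the isometry of $R_y$, and proves (iii)$\Rightarrow$(i) by truncating with $P_S$ via Theorem~\ref{theo:plancherel}, then using Lemma~\ref{lem:PS-finite-dim} and total boundedness of bounded sets in the finite-dimensional space $P_S\big(L^2(G,\mathcal{H})\big)$.
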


\begin{proof}
By Corollary~\ref{cor:equiv}, assertions (ii) and (iii) are already known to be equivalent. It therefore suffices to prove
(i)$\Rightarrow$(ii) and (iii)$\Rightarrow$(i) to close the cycle and establish the equivalence of all the three assertions.

\medskip
\noindent\textbf{(i) $\Rightarrow$ (ii).}
Let $\varepsilon>0$. Since $K$ is precompact, it is totally bounded, so there exist
$f_1,\dots,f_m\in L^2(G,\mathcal{H})$ such that
$$
K\subset\bigcup_{i=1}^m B\left(f_i,\frac{\varepsilon}{3}\right),
$$
where $B\left(g,r\right)$ is the open ball of centre $g$ and radius $r$.
For $i\in\{1,\dots,m\}$, the map $y\mapsto R_yf_i$ is continuous from $G$ into
$L^2(G,\mathcal{H})$ (Lemma~\ref{lemma:translation-continuous}). Hence,  there is an open neighborhood $O_i$ of $e$ such that
$\|R_yf_i-f_i\|_{L^2(G,\mathcal{H})}<\frac{\varepsilon}{3}$ 
for $y\in O_i$. Let $O= \inter\limits_{i=1}^mO_i$,
an open neighborhood of $e$.

Let $f\in K$ and $y\in O$. Choose $i$ such that $\|f-f_i\|_{L^2(G,\mathcal{H})}<\frac{\varepsilon}{3}$.
Since $R_y$ is an isometry of $L^2(G,\mathcal{H})$,
$\|R_yf-R_yf_i\|_{L^2(G,\mathcal{H})}=\|f-f_i\|_{L^2(G,\mathcal{H})}<\frac{\varepsilon}{3}$. So, by the triangle
inequality
$$
\|R_yf-f\|_{L^2(G,\mathcal{H})}\le\|R_yf-R_yf_i\|_{L^2(G,\mathcal{H})}+\|R_yf_i-f_i\|_{L^2(G,\mathcal{H})}+
\|f_i-f\|_{L^2(G,\mathcal{H})}<\varepsilon.
$$
Therefore,
$K$ is uniformly $L^2(G,\mathcal{H})$-equicontinuous.

\medskip
\noindent\textbf{(iii) $\Rightarrow$ (i).}
Let $\varepsilon>0$. By (iii), choose a finite set $S\subset\widehat{G}$ with
$\|\widehat f\|_{\mathscr{S}_2(\widehat{G}\setminus S,\mathcal{H})}<\frac{\varepsilon}{2}$ for all $f\in K$. Let
$P_S$ be the band-limiting projection defined in  Lemma~\ref{lem:PS-finite-dim}.  By Theorem~\ref{theo:plancherel}, we have
$$
\|f-P_Sf\|_{L^2(G,\mathcal{H})}=\|\widehat f-\widehat{P_Sf}\|_{\mathscr{S}_2(\widehat{G},\mathcal{H})}=
\|\widehat f\|_{\mathscr{S}_2(\widehat{G}\setminus S,\mathcal{H})}<\frac{\varepsilon}{2},\quad f\in K.
$$

\noindent By Lemma~\ref{lem:PS-finite-dim}, the
range $P_S\big(L^2(G,\mathcal{H})\big)$ is isomorphic to a finite direct sum of copies of $\mathcal{H}$.  Since $\dim\mathcal{H}<\infty$ by hypothesis, this finite direct sum is finite-dimensional. Moreover, the set $\{P_Sf:f\in K\}$ is a bounded subset of this finite-dimensional space because $P_S$
is an orthogonal projection on $L^2(G,\mathcal{H})$, so $\|P_Sf\|_{L^2(G,\mathcal{H})}\le
\|f\|_{L^2(G,\mathcal{H})}$, and $K$ is bounded by hypothesis. A bounded subset of a
finite-dimensional normed space is precompact according to the Bolzano-Weierstrass theorem.  Thus, 
$\{P_Sf:f\in K\}$ is totally bounded: there exists $\{g_1,\dots,g_k\}\subset
P_S\big(L^2(G,\mathcal{H})\big)$ such that 
$$\{P_Sf:f\in K\}\subset \union\limits_{j=1}^k B\left(g_j,\frac{\varepsilon}{2}\right).$$

For each $f\in K$, there exists  $g_j\in \{g_1,\dots,g_k\}$ such that $\|P_Sf-g_j\|_{L^2(G,\mathcal{H})}<\frac{\varepsilon}{2}$. By the
triangle inequality, 
\begin{align*}
\|f-g_j\|_{L^2(G,\mathcal{H})}&\le\|f-P_Sf\|_{L^2(G,\mathcal{H})}+\|P_Sf-g_j\|_{L^2(G,\mathcal{H})}\\
&<\frac{\varepsilon}{2}+\frac{\varepsilon}{2}=\varepsilon.
\end{align*}
Thus $K\subset\union\limits_{j=1}^kB(g_j,\varepsilon)$; that is,  $K$ is totally bounded in
$L^2(G,\mathcal{H})$. Hence, $K$ is precompact.
\end{proof}

\begin{remark}{\rm
When $\dim\mathcal{H}=1$, Theorem~\ref{thm:main} reduces to the Pego theorem proved by  Kumar (Theorem~\ref{thm:kumar}).
The finite-dimensionality of $\mathcal{H}$ is used only in the last paragraph of
(iii)$\Rightarrow$(i); it cannot be dropped without an additional hypothesis, since a
bounded set in an infinite-dimensional Hilbert space need not be precompact.
}\end{remark}

\begin{counterexample}\label{ex:counterexample}{\rm
Let $G$ be a compact group and  let $\mathcal{H}$ be an {\it infinite-dimensional} Hilbert space, with orthonormal sequence
$\{e_n\}_{n\ge1}$, and let $f_n\in L^2(G,\mathcal{H})$ be the constant function
$f_n(x)\equiv e_n,\quad x\in G$. Set $K=\{f_n : n\ge1\}$.

\begin{itemize}
\item $K$ is bounded: $\|f_n\|_{L^2(G,\mathcal{H})}=\|e_n\|_\mathcal{H}=1$ for every $n$.

\item $K$ is uniformly $L^2(G,\mathcal{H})$-equicontinuous: since $f_n$ is
constant, $R_yf_n=f_n$ for every $y\in G$. So,  Definition~\ref{def:equicont} holds with
$O=G$ for every $\varepsilon>0$.

\item $\widehat K$ has uniform $\mathscr{S}_2(\widehat{G},\mathcal{H})$-decay: a constant
function has Fourier transform supported only on the trivial representation
$\sigma_0\in\widehat{G}$ (see Proposition~\ref{Claim}), so $\widehat f_n(\sigma)=0$ for all $\sigma\ne\sigma_0$ and all $n$. Thus, 
$$
\|\widehat f_n\|_{\mathscr{S}_2(\widehat{G}\setminus\{\sigma_0\},\,\mathcal{H})}=0,\quad n\ge1,
$$
and Definition~\ref{def:decay} holds with the single finite set $S=\{\sigma_0\}$.

\item $K$ is not precompact: for $n\ne m$,
$$
\|f_n-f_m\|_{L^2(G,\mathcal{H})}=\|e_n-e_m\|_\mathcal{H}=\sqrt2.
$$
 Hence, $K$ has no Cauchy, and so no convergent, subsequence.
\end{itemize}
\noindent  Thus conditions (ii) and (iii) of Theorem~\ref{thm:main} both hold for $K$ while
condition (i) fails: the equivalence breaks down as soon as $\dim\mathcal{H}=\infty$.
}\end{counterexample}

\begin{remark}{\rm 
  When $\dim\mathcal{H}=\infty$, a bounded
set of values need not be precompact, and no amount of control in the $G$-direction
can repair that. Therefore an additional hypothesis on the  set of values in $\mathcal{H}$ would be
needed to restore the equivalence. To this end, we introduce the uniform thinness hypothesis to obtain an infinite-dimensional version of the Pego theorem. 
}\end{remark}

\begin{definition}\label{def:tight}
$K\subset L^2(G,\mathcal{H})$ is said to be uniformly tight if for every $\varepsilon>0$ there is a
finite-dimensional subspace $V\subset\mathcal{H}$ such that
$$
\|f - Q_V\circ f\|_{L^2(G,\mathcal{H})}<\varepsilon,\quad f\in K,
$$
where $Q_V:\mathcal{H}\to V$ is the orthogonal projection on $V$.
\end{definition}

Hereafter is a vector version of the Pego theorem on compact groups (Version 2).

\begin{theorem}\label{thm:main-tight}
Let $\mathcal{H}$ be a complex Hilbert space (not assumed finite-dimensional), and let
$K\subset L^2(G,\mathcal{H})$ be bounded and uniformly tight. Then, the following assertions are
equivalent:
\begin{enumerate}
\item[(i)] $K$ is precompact;
\item[(ii)] $K$ is uniformly
$L^2(G,\mathcal{H})$-equicontinuous;
\item[(iii)] $\widehat K$ has uniform $\mathscr{S}_2(\widehat{G},\mathcal{H})$-decay.
\end{enumerate} 
\end{theorem}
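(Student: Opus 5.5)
By Corollary~\ref{cor:equiv}, (ii) and (iii) are equivalent for bounded $K$. The argument for (i)$\Rightarrow$(ii) in Theorem~\ref{thm:main} never used $\dim\mathcal{H}<\infty$: it relies only on total boundedness, Lemma~\ref{lemma:translation-continuous} and the isometry of $R_y$. So it carries over verbatim. It remains to prove (iii)$\Rightarrow$(i), and this is where uniform tightness replaces finite-dimensionality.

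Fix $\varepsilon>0$. By (iii), choose a finite $S\subset\widehat{G}$ with $\|f-P_Sf\|_{L^2(G,\mathcal{H})}<\varepsilon/3$ for all $f\in K$; this follows from Theorem~\ref{theo:plancherel} as in the proof of Theorem~\ref{thm:main}. By Definition~\ref{def:tight}, choose a finite-dimensional $V\subset\mathcal{H}$ with $\|f-Q_V\circ f\|_{L^2(G,\mathcal{H})}<\varepsilon/3$ for all $f\in K$. Let $\widetilde Q_V$ denote the pointwise operator $f\mapsto Q_V\circ f$ on $L^2(G,\mathcal{H})$. It is an orthogonal projection, hence a contraction.

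The key observation is that $\widetilde Q_V$ commutes with $P_S$. The bounded linear map $Q_V$ passes through the Bochner integral, so $\widehat{Q_V\circ f}(\sigma)_{ij}=Q_V\widehat f(\sigma)_{ij}$. We then estimate
\[
\|f-\widetilde Q_VP_Sf\|\le\|f-\widetilde Q_Vf\|+\|\widetilde Q_V(f-P_Sf)\|<\tfrac{2\varepsilon}{3}.
\]
The set $\{\widetilde Q_VP_Sf:f\in K\}$ is bounded by $\sup_K\|f\|$. By Lemma~\ref{lem:PS-finite-dim} and the commutation, it lies in $\Psi(V^N)$, which has dimension $N\dim V<\infty$. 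By Bolzano--Weierstrass it is therefore covered by finitely many balls of radius $\varepsilon/3$ centred at points $g_1,\dots,g_k$. The triangle inequality then gives $K\subset\bigcup_j B(g_j,\varepsilon)$, so $K$ is totally bounded and hence precompact.

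The main point to check carefully is the commutation $\widetilde Q_VP_S=P_S\widetilde Q_V$, equivalently that $\widetilde Q_VP_Sf$ lands in the finite-dimensional space $\Psi(V^N)$. I would verify it directly: applying $Q_V$ to the finite sum $P_Sf=\sum_{\sigma\in S}d_\sigma\sum_{i,j}u^\sigma_{ij}\widehat f(\sigma)_{ij}$ gives $\sum_{\sigma\in S}d_\sigma\sum_{i,j}u^\sigma_{ij}Q_V\widehat f(\sigma)_{ij}=\Psi\big((Q_V\widehat f(\sigma)_{ij})\big)$. This is manifestly in $\Psi(V^N)$, so the argument does not even require the full commutation statement.

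The remaining estimates are routine. One may also remark that tightness is not needed for (i)$\Rightarrow$(ii), but the theorem assumes it throughout anyway.
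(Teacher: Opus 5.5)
Your proposal is correct and follows essentially the same route as the paper: (ii)$\Leftrightarrow$(iii) from Corollary~\ref{cor:equiv}, the dimension-free total-boundedness argument for (i)$\Rightarrow$(ii), and, for (iii)$\Rightarrow$(i), approximating each $f\in K$ by $Q_V\circ(P_Sf)$, which ranges over a bounded subset of a finite-dimensional space. The only minor difference is that you bound $\|Q_V\circ(f-P_Sf)\|_{L^2(G,\mathcal{H})}$ directly by contractivity of the pointwise projection, whereas the paper goes through the commutation $P_S(Q_V\circ f)=Q_V\circ(P_Sf)$ and the Fourier tail; your version is slightly more economical and, as you note, does not need the commutation.
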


\begin{proof}
By Corollary~\ref{cor:equiv}, assertions (ii) and (iii) are already known to be equivalent. It
therefore suffices to prove (i)$\Rightarrow$(ii) and (iii)$\Rightarrow$(i).

\medskip
\noindent\textbf{(i) $\Rightarrow$ (ii).}
This implication does not use tightness or any dimension hypothesis on $\mathcal{H}$. Let $\varepsilon>0$. Since $K$ is precompact, it is totally bounded; that is,  there exist
$f_1,\dots,f_m\in L^2(G,\mathcal{H})$ with $K\subset\union\limits_{i=1}^mB(f_i,\frac{\varepsilon}{3})$. By
Lemma~\ref{lemma:translation-continuous}, for each $i$ the map $y\mapsto R_yf_i$
is continuous at $e$, so there is an open neighborhood $O_i$ of $e$ such that
$\|R_yf_i-f_i\|_{L^2(G,\mathcal{H})}<\frac{\varepsilon}{3}$ for $y\in O_i$. Let $O=\inter\limits_{i=1}^mO_i$.
For $f\in K$ and $y\in O$, there exists $i\in\{1,\cdots,m\}$ with $\|f-f_i\|_{L^2(G,\mathcal{H})}<\frac{\varepsilon}{3}$.
By the triangle inequality and the fact  that $R_y$ is an isometry of $L^2(G,\mathcal{H})$, we have
\begin{align*}
\|R_yf-f\|_{L^2(G,\mathcal{H})}&\le\|R_yf-R_yf_i\|_{L^2(G,\mathcal{H})}+\|R_yf_i-f_i\|_{L^2(G,\mathcal{H})}+
\|f_i-f\|_{L^2(G,\mathcal{H})}\\
&<\frac{\varepsilon}{3}+\frac{\varepsilon}{3}+\frac{\varepsilon}{3}=\varepsilon.
\end{align*}
Hence, $K$ is uniformly $L^2(G,\mathcal{H})$-equicontinuous.

\medskip
\noindent\textbf{(iii) $\Rightarrow$ (i).}
This is where boundedness alone is not enough when $\dim\mathcal{H}=\infty$
(Counter-example~\ref{ex:counterexample}), and where uniform tightness supplies exactly the missing control in $\mathcal{H}$.

Let $\varepsilon>0$.
 By hypothesis, there exists a  finite set 
$S\subset\widehat{G}$ with $\|\widehat f\|_{\mathscr{S}_2(\widehat{G}\setminus S,\mathcal{H})}<\frac{\varepsilon}{4}$ for
all $f\in K$. Let $P_S$ be the band-limiting projection. By Theorem~\ref{theo:plancherel}, we have 
$$
\|f-P_Sf\|_{L^2(G,\mathcal{H})}=\|\widehat f\|_{\mathscr{S}_2(\widehat{G}\setminus S,\mathcal{H})}<
\frac{\varepsilon}{4},\quad f\in K.
$$
By uniform tightness of
$K$, there exists a finite-dimensional subspace $V\subset\mathcal{H}$ such that
$$
\|f-Q_V\!\circ f\|_{L^2(G,\mathcal{H})}<\frac{\varepsilon}{4},\quad f\in K,
$$
where $Q_V:\mathcal{H}\to V$ is the orthogonal projection on $V$. Since $Q_V$ is a bounded linear map on
$\mathcal{H}$, it commutes with the Bochner integral defining $\widehat f(\sigma)$. Indeed,
\begin{align*}
\widehat{Q_V\circ f}(\sigma)(\xi,\eta)&=\int_G\langle \sigma(x)^* \xi,\eta\rangle_{H_\sigma}Q_V(f(x))dx\\
&=Q_V\left(\int_G\langle \sigma(x)^* \xi,\eta\rangle_{H_\sigma}f(x)dx\right)\\
&=Q_V(\widehat f(\sigma)(\xi,\eta)).
\end{align*}
Hence, $\widehat{Q_V\circ f}(\sigma)=Q_V\big(\widehat f(\sigma)\big)$. Applying the definition of $P_S$, we obtain
$P_S(Q_V\!\circ f)=Q_V\!\circ(P_Sf)$. Since $Q_V$ is an orthogonal projection
(hence norm-nonincreasing), we have 
\begin{align*}
\|Q_V\!\circ f-P_S(Q_V\!\circ f)\|_{L^2(G,\mathcal{H})}&=\|\widehat{Q_V\circ f}\|_{\mathscr{S}_2(\widehat{G}\setminus
S,\mathcal{H})}\\
&\le\|\widehat f\|_{\mathscr{S}_2(\widehat{G}\setminus S,\mathcal{H})}<\frac{\varepsilon}{4},\quad f\in K.
\end{align*}

For $f\in K$, we have 
\begin{align*}
\|f-Q_V(P_Sf)\|_{L^2(G,\mathcal{H})}&\le\|f-Q_V\!\circ f\|_{L^2(G,\mathcal{H})}+\|Q_V\!\circ f-Q_V(P_Sf)\|_{L^2(G,\mathcal{H})}\\
&<\frac{\varepsilon}{4}+\frac{\varepsilon}{4}=\frac{\varepsilon}{2},\quad f\in K.
\end{align*}

\noindent The set $\mathcal{A}:=\{Q_V(P_Sf):f\in K\}$ lies in the linear span of the
finitely many functions $u^\sigma_{ij}\otimes v$ with $\sigma\in S, \, 1\leq i,j\leq d_\sigma$ and $v$ ranging over a basis of the finite-dimensional subspace $V$. Therefore,  $\mathcal{A}$ embeds in a finite-dimensional subspace of $L^2(G,\mathcal{H})$. Since $P_S,Q_V$ are both norm-nonincreasing and $K$ is bounded,
$\mathcal{A}$ is a bounded subset of this finite-dimensional space, hence precompact,  i.e.\  totally bounded. There exist $\{g_1,\dots,g_k\}\subset L^2(G,\mathcal{H})$ such that $\mathcal{A}\subset \union\limits_{i=1}^kB(g_i,\frac{\varepsilon}{2})$. 

 For $f\in K$, choose $g_j$ with $\|Q_V(P_Sf)-g_j\|_{L^2(G,\mathcal{H})}
<\frac{\varepsilon}{2}$.  Then, 
\begin{align*}
\|f-g_j\|_{L^2(G,\mathcal{H})}&\le\|f-Q_V(P_Sf)\|_{L^2(G,\mathcal{H})}+
\|Q_V(P_Sf)-g_j\|_{L^2(G,\mathcal{H})}\\
&<\frac{\varepsilon}{2}+\frac{\varepsilon}{2}=\varepsilon.
\end{align*}
 Hence, $K$ is totally bounded, i.e.\ precompact.
\end{proof}

\section*{Toward Applications}
Pego's original theorem carried, alongside its analytic content, a remark of an information-theoretic flavor: the rate at which a precompact family's Fourier coefficients decay controls the number of degrees of freedom needed to approximate every member of the family to within a prescribed accuracy. The vector-valued theorems established here suggest an analogous reading for group-structured data that is genuinely multi-channel rather than scalar. A bounded, uniformly $L^2(G,\mathcal{H})$-equicontinuous family of $\mathcal{H}$-valued signals on a compact group $G$ for instance, vector or spectrum-valued measurements indexed by an angular or periodic variable is  compressible by finitely many modes whenever $\mathcal{H}$ itself is finite-dimensional. Theorem~\ref{thm:main-tight} extends this to the infinite-dimensional setting once the family is additionally uniformly tight, i.e. once its essential content can be captured, uniformly across the family, by a single finite-dimensional subspace of $\mathcal{H}$.

\subsection*{Degrees of freedom}

The proofs of Theorem~\ref{thm:main} and Theorem~\ref{thm:main-tight} are constructive: given
$\varepsilon>0$, they exhibit an explicit finite index set $S\subset\widehat G$
such that every $f\in K$ is within $\frac{\varepsilon}{2}$
of its band-limited truncation $P_Sf$. This yields an explicit count of how
many scalar coefficients are needed to represent every member of $K$ to a
prescribed accuracy.

\begin{corollary}[finite-dimensional case]
\label{cor:dof-finite}
Let $\mathcal{H}$ be a complex Hilbert space with $\dim \mathcal{H} = d < \infty$, and let
$K\subset L^2(G,\mathcal{H})$ be bounded. For every $\varepsilon>0$ there is a finite
set $S\subset\widehat G$, with
\[
N(S) := \sum_{\sigma\in S} d_\sigma^2,
\]
such that every $f\in K$ is determined, up to an $L^2(G,\mathcal{H})$-error of at most
$\varepsilon$, by its $N(S)\times d$ scalar Fourier coefficients
\[
\big\{\langle \widehat f(\sigma)_{ij}, h_k\rangle_\mathcal{H} \;:\; \sigma\in S,\
1\le i,j\le d_\sigma,\ 1\le k\le d\big\},
\]
where $\{h_k\}_{k=1}^d$ is any fixed orthonormal basis of $\mathcal{H}$.
\end{corollary}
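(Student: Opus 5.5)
The plan is to read the corollary as the quantitative content of the step (iii)$\Rightarrow$(i) in the proof of Theorem~\ref{thm:main}. One caveat comes first. Boundedness alone cannot give a set $S$ that works uniformly over $K$: a bounded sequence such as $\sqrt{d_{\sigma_n}}\,u^{\sigma_n}_{11}\otimes h_1$, with distinct $\sigma_n$, escapes every finite $S$. So I would prove the statement for bounded $K$ satisfying one, hence all, of the equivalent conditions (i)--(iii) of Theorem~\ref{thm:main}, which is the setting described in the preceding paragraph ("a precompact family"). Under that hypothesis, (iii) supplies, for the given $\varepsilon>0$, a finite set $S\subset\widehat G$ with $\|\widehat f\|_{\mathscr{S}_2(\widehat G\setminus S,\mathcal{H})}<\varepsilon$ for all $f\in K$. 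If one starts from (i) or (ii) instead, one passes to (iii) through Theorem~\ref{thm:main} or Corollary~\ref{cor:equiv}.

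Next, I would show that $P_Sf$ is an explicit function of the listed scalars. By the inversion formula (\ref{eq:inversionformula}) restricted to $S$, as in Lemma~\ref{lem:PS-finite-dim}, we have $P_Sf=\sum_{\sigma\in S}d_\sigma\sum_{i,j}u^\sigma_{ij}\,\widehat f(\sigma)_{ij}$. Expanding each vector $\widehat f(\sigma)_{ij}\in\mathcal H$ in the orthonormal basis $\{h_k\}_{k=1}^d$ gives
\[
P_Sf=\sum_{\sigma\in S}d_\sigma\sum_{i,j=1}^{d_\sigma}\sum_{k=1}^{d}\langle\widehat f(\sigma)_{ij},h_k\rangle_{\mathcal H}\,u^\sigma_{ij}\otimes h_k .
\]
Thus $P_Sf$ is a fixed linear combination, independent of $f$, of the $N(S)\cdot d$ scalars $\langle\widehat f(\sigma)_{ij},h_k\rangle_{\mathcal H}$. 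Their number is exactly $\sum_{\sigma\in S}d_\sigma^2\cdot d$. Equivalently, $\Psi$ from Lemma~\ref{lem:PS-finite-dim}, composed with the coordinate isomorphism $\mathcal H^N\cong\mathbb C^{Nd}$, reconstructs $P_Sf$ from these coordinates.

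Finally, Theorem~\ref{theo:plancherel} gives $\|f-P_Sf\|_{L^2(G,\mathcal H)}=\|\widehat f\|_{\mathscr{S}_2(\widehat G\setminus S,\mathcal H)}<\varepsilon$ for every $f\in K$. So the reconstruction from the $N(S)\times d$ coefficients approximates each $f$ within $\varepsilon$, with $S$ independent of $f$. The only genuine obstacle is the uniformity of $S$ over $K$, and it is resolved precisely by invoking the decay condition (iii). The finite-dimensionality of $\mathcal H$ is used only to make the count $N(S)\times d$ finite.
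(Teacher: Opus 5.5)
Your proof is correct and is essentially the paper's own argument: uniform $\mathscr{S}_2(\widehat{G},\mathcal{H})$-decay supplies $S$, Theorem~\ref{theo:plancherel} gives $\|f-P_Sf\|_{L^2(G,\mathcal{H})}<\varepsilon$, and Lemma~\ref{lem:PS-finite-dim} (equivalently, the truncated inversion formula expanded in $\{h_k\}$) coordinatizes $P_Sf$ by the $N(S)\times d$ listed scalars. Your caveat is also justified: the paper's proof silently invokes uniform decay although only boundedness is assumed, and for infinite $G$ your example refutes the statement as written (for any finite $S$, infinitely many of your functions have all their $S$-coefficients equal to zero yet are pairwise $\sqrt2$ apart), so one of (i)--(iii) of Theorem~\ref{thm:main} must be added as a hypothesis, exactly as you do.
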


\begin{proof}
This is immediate from the proof of Theorem~\ref{thm:main} because by uniform
$\mathscr{S}_2(\widehat{G},\mathcal{H})$-decay there is $S$ such that
$\|f-P_Sf\|_{L^2(G,\mathcal{H})}<\varepsilon$ for all $f\in K$, and by
Lemma~\ref{lem:PS-finite-dim} the range $P_S\big(L^2(G,\mathcal{H})\big)$ is linearly isometric to
$\mathcal{H}^{N(S)}\cong \mathbb C^{N(S)\times d}$, coordinatized exactly by the coefficients
listed above.
\end{proof}

\begin{corollary}[uniformly tight case]
\label{cor:dof-tight}
Let $\mathcal{H}$ be any complex Hilbert space and let $K\subset L^2(G,\mathcal{H})$ be bounded
and uniformly tight. For every $\varepsilon>0$ there exist a finite set
$S\subset\widehat G$ and a finite-dimensional subspace $V\subset \mathcal{H}$,
$\dim V = m$, such that every $f\in K$ is determined up to an
$L^2(G,\mathcal{H})$-error of at most $\varepsilon$ by $N(S)\times m$ scalar
coefficients.
\end{corollary}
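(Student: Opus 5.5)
The argument reuses the construction in the proof of Theorem~\ref{thm:main-tight}, (iii)$\Rightarrow$(i). As with Corollary~\ref{cor:dof-finite}, the statement only makes sense if $K$ also satisfies one of the equivalent conditions of Theorem~\ref{thm:main-tight}, and I will assume this. Boundedness and tightness alone cannot suffice. For a counterexample, take $\mathcal{H}=\mathbb C$, which is trivially tight, and $G=\mathbb T$ with the characters $e^{inx}$. This family is bounded, but it is not approximable by any fixed finite set of modes. So I will assume $\widehat K$ has uniform $\mathscr{S}_2(\widehat G,\mathcal{H})$-decay; by Theorem~\ref{thm:main-tight} this is the same as assuming $K$ is precompact, or uniformly equicontinuous.

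Fix $\varepsilon>0$. First, uniform decay gives a finite set $S\subset\widehat G$ with $\|\widehat f\|_{\mathscr{S}_2(\widehat G\setminus S,\mathcal{H})}<\varepsilon/4$ for all $f\in K$. Second, Definition~\ref{def:tight} gives a finite-dimensional subspace $V\subset\mathcal{H}$ with $\|f-Q_V\circ f\|_{L^2(G,\mathcal{H})}<\varepsilon/4$ for all $f\in K$. Let $m=\dim V$ and choose an orthonormal basis $v_1,\dots,v_m$ of $V$.

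The proof of Theorem~\ref{thm:main-tight} shows, using $\widehat{Q_V\circ f}=Q_V(\widehat f)$, the Plancherel identity of Theorem~\ref{theo:plancherel}, and the contractivity of $Q_V$, that
\[
\|f-Q_V(P_Sf)\|_{L^2(G,\mathcal{H})}<\frac{\varepsilon}{2}, \qquad f\in K .
\]
This is within the required error $\varepsilon$.

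It remains to check that $Q_V(P_Sf)$ depends only on the $N(S)\times m$ scalars $\langle \widehat f(\sigma)_{ij},v_l\rangle_{\mathcal{H}}$. Here $\sigma\in S$, $1\le i,j\le d_\sigma$ and $1\le l\le m$. By Lemma~\ref{lem:PS-finite-dim} and the inversion formula restricted to $S$, we have $P_Sf=\sum_{\sigma\in S}d_\sigma\sum_{i,j}u^\sigma_{ij}\,\widehat f(\sigma)_{ij}$. Since $Q_V$ is linear and acts pointwise in $x$,
\[
Q_V(P_Sf)(x)=\sum_{\sigma\in S}d_\sigma\sum_{i,j=1}^{d_\sigma}\sum_{l=1}^m u^\sigma_{ij}(x)\,\langle \widehat f(\sigma)_{ij},v_l\rangle_{\mathcal{H}}\,v_l .
\]
The right-hand side is a fixed linear combination, independent of $f$, of these scalars multiplied by the functions $u^\sigma_{ij}\otimes v_l$. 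The $\Psi$ isometry of Lemma~\ref{lem:PS-finite-dim}, restricted to $V^{N(S)}\cong\mathbb C^{N(S)\times m}$, shows that these coordinates are faithful: distinct coefficient arrays give distinct functions. This yields the claim.

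The only real obstacle is the hypothesis issue discussed in the first paragraph. Once decay is assumed, everything else is bookkeeping already carried out in the proof of Theorem~\ref{thm:main-tight}.
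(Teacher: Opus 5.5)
Your proof is correct and follows the paper's argument: its one-line proof points to the (iii)$\Rightarrow$(i) step of Theorem~\ref{thm:main-tight}, combining $P_S$ from uniform decay with $V$ from tightness, and your explicit formula for $Q_V(P_Sf)$ in terms of the scalars $\langle \widehat f(\sigma)_{ij},v_l\rangle_{\mathcal{H}}$ spells out what the paper leaves implicit. You are also right that the statement tacitly needs uniform $\mathscr{S}_2(\widehat G,\mathcal{H})$-decay (equivalently, equicontinuity or precompactness), which the paper's proof uses without listing as a hypothesis; your example $\{e^{inx}\}\subset L^2(\mathbb{T})$ shows it cannot be dropped.
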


\begin{proof}
Immediate from the proof of Theorem~\ref{thm:main-tight}, combining the band-limiting projection $P_S$
 with the finite-dimensional vector space $V$ arising from the  tightness hypothesis.
\end{proof}

These are the vector-valued analogues of the classical statement that a
translation-equicontinuous, Fourier-decaying family in $L^2(\mathbb{R}^n)$
can be approximated uniformly by $N$-term truncations, with $N$ depending
only on $\varepsilon$ and the family, not on the individual function. The
new content here is that when the values live in a Hilbert space rather
than $\mathbb{C}$, the same finite frequency set $S$ suffices for
every member of $K$, but the per-frequency cost multiplies by the
value-space dimension: $N(S)$ group-theoretic degrees of freedom times $d$
(or $m$, under tightness) value-space degrees of freedom. Concretely, for
$\mathcal{H}$-valued data indexed by a compact group, for instance,
multi-component measurements sampled over $SO(3)$ or a finite group of
symmetries,  Corollary~\ref{cor:dof-finite} says that a bounded,
equicontinuous family can be captured by finitely many complex numbers per
representation, and Corollary~\ref{cor:dof-tight} extends this to
infinite-dimensional value spaces at the cost of the extra tightness
hypothesis.

\subsection*{Algorithmic and numerical aspects}

Corollaries~\ref{cor:dof-finite} and \ref{cor:dof-tight} are existence
statements: for each $\varepsilon>0$ they guarantee a finite $S$ (and,
under tightness, a finite-dimensional $V$) that works uniformly over $K$,
but neither the proof of Theorem~\ref{thm:main} nor of Theorem~\ref{thm:main-tight} gives a
constructive rule for finding $S$ or $V$ from data, nor a quantitative
relationship between $\varepsilon$ and the cardinality  $|S|$ of $S$. What follows separates two
things clearly: (a) what can be computed exactly once $S$ and $V$ are
given, which is a direct consequence of the linear-algebraic content
of Lemma~\ref{lem:PS-finite-dim}; and (b) how one might
choose $S$ and $V$ from a finite sample of $K$ in practice, which is a
genuine estimation problem and is not solved by the theorems above. It requires additional statistical assumptions.

\subsubsection*{(a) Computing the truncation once $S,V$ are fixed}

Suppose $G$ is finite, or more generally that we work with a finite
quadrature rule $\{(x_\ell,w_\ell)\}_{\ell=1}^L\subset G\times\mathbb R_{>0}$
exact for the trigonometric-polynomial degree needed to reproduce the
matrix coefficients $u^\sigma_{ij}$ for $\sigma\in S$.
Given samples $f(x_1),\dots,f(x_L)\in \mathcal{H}$, the truncated Fourier
coefficients are the finite sums
\begin{equation}
\widehat f(\sigma)_{ij} \;\approx\; \sum_{\ell=1}^{L} w_\ell\, u^\sigma_{ij}(x_\ell)\, f(x_\ell),
\quad \sigma\in S,\ 1\le i,j\le d_\sigma,
\end{equation}
each an $\mathcal{H}$-valued linear combination of the $L$ samples. If in addition we
fix an orthonormal basis $\{v_1,\dots,v_m\}$ of $V$, the scalar coefficients
of Corollary~\ref{cor:dof-tight} are
\[
c^\sigma_{i,j,k} = \big\langle \hat f(\sigma)_{ij}, v_k \big\rangle_\mathcal{H},
\qquad 1\le k\le m,
\]
and $P_S(Q_Vf)$ is recovered from the $N(S)\times m$ complex numbers $c^\sigma_{i,j,k} $
by the finite inversion sum of Formula~(\ref{eq:inversionformula}). Computing all coefficients for one $f$ costs
$O(L\times N(S)\times \dim V)$ scalar operations if $V$ is represented in a
fixed basis, and reconstructing $P_S(Q_Vf)$ at a new point costs
$O(N(S)\times \dim V)$. None of this requires new theory; it is bookkeeping
around Lemma~\ref{lem:PS-finite-dim}.

\subsubsection*{(b) Choosing $S$ and $V$ from data.}

The theorems give no rate: uniform $\mathscr{S}_2(\widehat{G},\mathcal{H})$-decay only says that the tail
$\|f\|_{\mathscr{S}_2(\widehat{G}\setminus S,\mathcal{H})}$ can be made small for some finite set $S$, with no control on $|S|$ as a function of $\varepsilon$ beyond what is
implicit in $K$ itself. Any concrete choice of $S$ is therefore a modeling
decision, not a consequence of the present paper's results, unless one is willing
to assume an explicit decay rate (for instance,  $\sup\limits_{f\in K}\|\widehat{f} (\sigma)\|_{B_2(H_\sigma)}
\le C(1+d_\sigma)^{-\alpha}$ for some known $\alpha>0$, which the paper
does not assume or establish). With that caveat stated plainly, a natural
data-driven scheme is the following:

\begin{enumerate}
\item \textbf{Frequency selection.} Given samples $f_1,\dots,f_n\in K$, estimate
$e(\sigma) := \max\limits_{1\le r\le n} \|\widehat{f_r}(\sigma)\|_{B_2(H_\sigma)}$
for the finitely many $\sigma$ with $d_\sigma$ below some cutoff, and take
$S_\varepsilon = \{\sigma : e(\sigma) \ge \delta(\varepsilon)\}$ for a
threshold $\delta$ chosen e.g.\ by cross-validation. This is a heuristic
analogue of the existence argument in Lemma~\ref{lem:decay}/Lemma~\ref{lem:equicont} and inherits no
guarantee from them beyond the qualitative fact that such an $S$ exists
somewhere.

\item \textbf{Value-space selection via an empirical covariance operator.}
Uniform tightness (Definition~\ref{def:tight}) asks for a single finite-dimensional
$V\subset \mathcal{H}$ capturing all of $K$ to within $\varepsilon$ in $L^2(G,\mathcal{H})$-norm. A standard way to estimate such a $V$ from samples
$f_1,\dots,f_n$ is the (Bochner) empirical covariance operator
$$
\widehat{C} := \frac{1}{n}\sum_{r=1}^n \int_G (f_r(x)-\overline{f}(x))\otimes
(f_r(x)-\overline{f}(x))\, dm_G(x) \text{ with }   \overline{f}  = \frac{1}{n}\sum\limits_{r=1}^n f_r.
$$
The operator $\widehat{C}$ is a trace-class, self-adjoint, positive operator on $\mathcal{H}$ whenever the $f_r$
are bounded in $L^2(G,\mathcal{H})$. One may   take $\widehat{V}$ (the estimation of $V$) equals to the  span of the  top $m$
eigenvectors of $\widehat{C}$ (a Karhunen-Lo\`eve/Principal Component Analysis (PCA) truncation)\cite{karhunen1947,loeve1948}. This is the
mean-square-optimal $m$-dimensional subspace for the $n$ sampled
functions, by the standard optimality property of PCA; it is not, without
further argument, the same as the $V$ furnished by Definition~\ref{def:tight} for the
whole (possibly infinite, possibly not identical to $\{f_1,\dots,f_n\}$)
set $K$. Extending the sample estimate to a guarantee for all of $K$ would
require a concentration or covering argument relating $\{f_1,\dots,f_n\}$
to $K$, which we do not supply.

\item \textbf{Joint error accounting.} 
If both steps above are performed, the
triangle inequality gives
$$
\|f - P_{S_\varepsilon}(Q_{\widehat V} f)\|_{L^2(G,\mathcal{H})}
\;\le\; \|f - Q_{\widehat V}f\|_{L^2(G,\mathcal{H})} + \|Q_{\widehat V}f -
P_{S_\varepsilon}(Q_{\widehat V}f)\|_{L^2(G,\mathcal{H})},
$$
mirroring the proof of Theorem~\ref{thm:main-tight}; but turning this into a numerical
bound requires the two heuristic steps above to actually control their
respective terms on the full set $K$, not just on the $n$ samples
used to construct $S_\varepsilon$ and $\widehat{V}$,  a gap between
in-sample and out-of-sample guarantees that is a genuine open point, not a
detail we are glossing over.
\end{enumerate}

\noindent A rigorous treatment
would need either  an explicit, verifiable decay-rate assumption on $K$
replacing the qualitative uniform decay of Definition~\ref{def:decay}, or  a
learning-theoretic argument (e.g.\ a covering-number or Rademacher-complexity
bound for $K$\cite{bartlett2002})  converting the finite-sample estimate  into a
guarantee over $K$. Both are outside the scope of the present paper and we
leave them as open directions.


\begin{thebibliography}{9}

\bibitem{Assiamoua1989} V.S.K. Assiamoua, A. Olubummo, Fourier-Stieltjes transform of vector-valued measures on compact groups, Acta Sci. Math. (Szeged), {\bf 53} (1989), 301-307.

\bibitem{bartlett2002} P.L. Bartlett and S. Mendelson, Rademacher and Gaussian complexities: risk bounds and
structural results, J. Mach. Learn. Res. {\bf 3} (2002), 463-482.
\bibitem{BerghLofstrom}
J. Bergh and J. L\"ofstr\"om, Interpolation Spaces: An Introduction,
Grundlehren der Mathematischen Wissenschaften \textbf{223}, Springer-Verlag, Berlin-New York, 1976.

\bibitem{Dorfler2002} M. D\"orfler, H. G. Feichtinger and K. Gr\"ochenig, Compactness criteria in function spaces,
Colloq. Math., {\bf 94}(1) (2002), 37-50.

\bibitem{Folland} G.B. Folland, A course in abstract harmonic analysis,  CRC Press, Inc, Boca Raton, 1995. 


\bibitem{GarciaRubio}
J. Garc\'ia-Cuerva and J.L. Rubio de Francia, Weighted Norm Inequalities and
Related Topics, North-Holland Mathematics Studies \textbf{116}, North-Holland,
Amsterdam, 1985.

\bibitem{Gorka2014} P. G\'orka, Pego theorem on locally compact abelian groups, J. Algebra Appl. \textbf{13}(4) (2014), art. id. 1350143.


\bibitem{Gorka2016} P. G\'orka and T. Kostrzewa, Pego everywhere, J. Algebra Appl., {\bf 15}(4) (2016), art. id. 1650074.

\bibitem{Gorka2019} P.  G\'orka and P. P\'ospiech, Banach function spaces on locally compact groups, Ann. Funct.
Anal. {\bf 10}(4) (2019), 460-471.



\bibitem{Hanche-Olsen} H. Hanche-Olsen and H. Holden, The Kolmogorov-Riesz compactness theorem, Expo. Math.
{\bf 28}(4) (2010), 385-394.

\bibitem{Horvath2022} A.P. Horv\'ath, Compactness criteria via Laguerre and Hankel transformations, J. Math. Anal.
Appl.,  {\bf 507}(2) (2022), art. id. 125852.

\bibitem{karhunen1947}K. Karhunen, \"Uber lineare Methoden in der Wahrscheinlichkeitsrechnung, Ann. Acad. Sci. Fenn. Ser. A. I. Math.-Phys. {\bf 37}  (1947), 3-79.



\bibitem{Krukowski2020} M. Krukowski, Characterizing compact families via the Laplace transform, Ann. Acad. Sci.
Fenn. Math., {\bf 45}(2) (2020), 991-1002.

\bibitem{Krukowski} M. Krukowski, How Arzel\`a and Ascoli would have proved Pego theorem for $L^1(G)$ (if they
lived in the 21st century)?, preprint, 2020. arXiv 2006.12130.




\bibitem{Kumar2024} M. Kumar, \emph{Pego theorem on compact groups}, Pacific J. Math.
\textbf{328}(1) (2024), 137-143.

\bibitem{loeve1948} M. Lo\`eve, Fonctions al\'eatoires du second ordre, in P. L\'evy, Processus Stochastiques et Mouvement
Brownien, Gauthier-Villars, Paris, 1948, 366-420.

\bibitem{Mensah2024} Y. Mensah, Vector Fourier analysis on compact groups and Assiamoua spaces,  Methods Funct. Anal. Topol., {\bf 30}(3-4) (2024), 147-154.


\bibitem{Peetre} J. Peetre, Sur la transformation de Fourier des fonctions \`a valeurs vectorielles, Rend. Sem. Mat. Univ. Padova \textbf{42} (1969), 15-26.

\bibitem{Pego} R.L. Pego, Compactness in $L^2$ and the Fourier transform, Proc.
Amer. Math. Soc. \textbf{95} (1985), 252-254.

\bibitem{Weil} A. Weil, L'int\'egration dans les Groupes Topologiques et ses Applications, Hermann, Paris, 1940.


\end{thebibliography}
\end{document}